\newtheorem{proposition}{Proposition}[section]
\newtheorem{remark}{Remark}[section]
\DeclareSymbolFont{AMSb}{U}{msb}{M}{n}
\DeclareMathSymbol{\bE}{\mathbin}{AMSb}{"45}
\DeclareMathSymbol{\bF}{\mathbin}{AMSb}{"46}
\DeclareMathSymbol{\bG}{\mathbin}{AMSb}{"47}
\DeclareMathSymbol{\bH}{\mathbin}{AMSb}{"48}
\DeclareMathSymbol{\N}{\mathbin}{AMSb}{"4E}
\DeclareMathSymbol{\PR}{\mathbin}{AMSb}{"50}
\DeclareMathSymbol{\R}{\mathbin}{AMSb}{"52}
\DeclareMathSymbol{\bX}{\mathbin}{AMSb}{"58}
\DeclareMathSymbol{\bY}{\mathbin}{AMSb}{"59}
\DeclareMathSymbol{\bZ}{\mathbin}{AMSb}{"5A}
\newcommand{\half}{{\frac{1}{2}}}
\newcommand{\ca}{{\cal A}}
\newcommand{\cd}{{\cal D}}
\newcommand{\ccf}{{\cal F}}
\newcommand{\cp}{{\cal P}}
\newcommand{\cx}{{\cal X}}
\newcommand{\cy}{{\cal Y}}
\newcommand{\cdal}{{\cd_{\alpha}}}
\newcommand{\cdmal}{{\cd_{-\alpha}}}
\newcommand{\cdmo}{{\cd_{-1}}}
\newcommand{\cdpo}{{\cd_1}}
\newcommand{\phialp}{\phi_\alpha}
\newcommand{\Phialp}{\Phi_\alpha}
\newcommand{\phimalp}{\phi_{-\alpha}}
\newcommand{\Phimalp}{\Phi_{-\alpha}}
\newcommand{\phie}{\phi_e}
\newcommand{\Phie}{\Phi_e}
\newcommand{\phim}{\phi_m}
\newcommand{\Phim}{\Phi_m}
\newcommand{\phimo}{\phi_{-1}}
\newcommand{\Phimo}{\Phi_{-1}}
\newcommand{\phipo}{\phi_1}
\newcommand{\Phipo}{\Phi_1}
\newcommand{\phibet}{\phi_\beta}
\newcommand{\Util}{{\tilde{U}}}
\newcommand{\Vtil}{{\tilde{V}}}
\newcommand{\Pihat}{{\hat{\Pi}}}
\newcommand{\bfe}{{\bf e}}
\newcommand{\bfi}{{\bf i}}
\newcommand{\bfl}{{\bf l}}
\newcommand{\bfm}{{\bf m}}
\newcommand{\bfn}{{\bf n}}
\newcommand{\bfu}{{\bf u}}
\newcommand{\bfP}{{\bf P}}
\newcommand{\bfU}{{\bf U}}
\newcommand{\bfv}{{\bf v}}
\newcommand{\bfw}{{\bf w}}
\newcommand{\bfV}{{\bf V}}
\newcommand{\bfW}{{\bf W}}
\newcommand{\bfy}{{\bf y}}
\newcommand{\abar}{{\bar{a}}}
\newcommand{\fbar}{{\bar{f}}}
\newcommand{\Hbar}{{\bar{H}}}
\newcommand{\bFbar}{{\bar{\bF}}}
\newcommand{\bGbar}{{\bar{\bG}}}
\newcommand{\Mbar}{{\bar{M}}}
\newcommand{\Nbar}{{\bar{N}}}
\newcommand{\Pbar}{{\bar{P}}}
\newcommand{\Ubar}{{\bar{U}}}
\newcommand{\phibar}{{\bar{\phi}}}
\newcommand{\Phibar}{{\bar{\Phi}}}
\newcommand{\bfubar}{{\bar{\bfu}}}
\newcommand{\bfUbar}{{\bar{\bfU}}}
\newcommand{\bfvbar}{{\bar{\bfv}}}
\newcommand{\bfVbar}{{\bar{\bfV}}}
\newcommand{\bfwbar}{{\bar{\bfw}}}
\newcommand{\E}{{\hbox{\bf E}}}
\DeclareMathOperator{\id}{{id}}
\newcommand{\Emu}{{\hbox{\bf E}_\mu}}
\newcommand{\LL}{{L^2(\mu)}}
\newcommand{\EP}{{\hbox{\bf E}_P}}
\newcommand{\fndot}{{\,\cdot\,}}
\title{Manifolds of Differentiable Densities
  \thanks{To appear in ESAIM: Probability and Statistics. The original publication is
  available at www.esaim-ps.org \copyright\ EDP Sciences, SMAI}}
\author{Nigel J.~Newton
  \thanks{School of Computer Science and Electronic Engineering, University of Essex,
  Wivenhoe Park, Colchester, CO4 3SQ, United Kingdom. njn@essex.ac.uk}}
\begin{document}

\maketitle

\begin{abstract}
We develop a family of infinite-dimensional (non-parametric) manifolds of probability
measures.  The latter are defined on underlying Banach spaces, and have densities of
class $C_b^k$ with respect to appropriate reference measures.  The case $k=\infty$, in
which the manifolds are modelled on Fr\'{e}chet spaces, is included.  The manifolds
admit the Fisher-Rao metric and, unusually for the non-parametric setting, Amari's
$\alpha$-covariant derivatives for all $\alpha\in\R$.  By construction, they are
$C^\infty$-embedded  submanifolds of particular manifolds of finite measures.  The
statistical manifolds are dually ($\alpha=\pm 1$) flat, and admit mixture and exponential
representations as charts.  Their curvatures with respect to the $\alpha$-covariant
derivatives are derived.  The likelihood function associated with a finite sample is a
continuous function on each of the manifolds, and the $\alpha$-divergences are of
class $C^\infty$.

Keywords: Fisher-Rao Metric; Banach Manifold; Fr\'{e}chet Manifold; Information
  Geometry; Non-parametric Statistics.

2010 MSC: 46A20 60D05 62B10 62G05 94A17
\end{abstract}

\section{Introduction} \label{se:intro}

{\em Information Geometry} is the study of differential-geometric structures arising in
the theory of statistical estimation, and has a history going back (at least) to the work
of C.R.~Rao \cite{rao1}.  It is finding increasing application in many fields including
{\em asymptotic statistics}, {\em machine learning}, {\em signal processing} and
{\em statistical mechanics}. (See, for example, \cite{niba1,niba2} for some recent
developments.) The theory in finite dimensions (the parametric case) is well developed,
and treated pedagogically in a number of texts \cite{amar1,barn1,chen1,laur1,muri1}.  A
classical example is the finite-dimensional {\em exponential model}, in which linear
combinations of a finite number of real-valued random variables (defined on an underlying
probability space $(\bX,\cx,\mu)$) are exponentiated and normalised to generate
probability density functions with respect to the reference measure $\mu$.  The topology
induced on the set of probability measures, thus defined, is consistent with the
important statistical divergences of estimation theory, and derivatives of the latter can
be used to define geometric objects such as a Riemannian metric (the Fisher-Rao metric)
and a family of covariant derivatives.

Central to any extension of these ideas to infinite dimensions, is the use of charts with
respect to which statistical divergences are sufficiently smooth.  The
{\em Kullback-Leibler divergence} between two probability measures $P\ll Q$ is defined
as follows:
\begin{equation}
\cd_{\rm KL}(P\,|\,Q) := \E_Q(dP/dQ)\log(dP/dQ), \label{eq:kldiv}
\end{equation}
where $\E_Q$ represents expectation (integration) with respect to $Q$. As is clear from
(\ref{eq:kldiv}), the regularity of $\cd_{\rm KL}$ is closely connected with that of
the density, $dP/dQ$, and its log (considered as elements of dual spaces of
real-valued functions on $\bX$).  In fact, much of information geometry concerns the
interplay between these two representations of $P$, and the exponential map that connects
them.  The two associated affine structures form the basis of a Fenchel-Legendre transform
underpinning the subject, and so manifolds that fully accommodate these structures are
particularly useful.

In the series of papers \cite{cepi1,gipi1,piro1,pise1}, G.~Pistone and his co-workers
developed an infinite-dimensional variant of the exponential model outlined above.
Probability measures in the manifold are mutually absolutely continuous with respect to
the reference measure $\mu$, and the manifold is covered by the charts
$s_Q(P)=\log dP/dQ-\E_Q\log dP/dQ$ for different ``patch-centric'' probability measures
$Q$.  These readily give $\log dP/dQ$ the desired regularity, but require exponential
Orlicz model spaces in order to do the same for $dP/dQ$.  The exponential Orlicz
manifold, $M_O$, has a strong topology, under which $\cd_{\rm KL}$ is of class $C^\infty$.
In \cite{piro1}, the authors define ``mean parameters'' on $M_O$.  Like $s_Q$, these
are defined locally at each $Q\in M_O$, by $\eta_Q(P)=dP/dQ-1$.  $\eta_Q$ maps into the
``pre-dual'' of the exponential Orlicz space.  However, despite being injective, $\eta_Q$
is not homeomorphic and so cannot be used as a chart.  By contrast, the manifolds
developed in \cite{newt4,newt7} use the ``balanced'' global chart
$\phi(P)=dP/d\mu-1+\log dP/d\mu-\Emu\log dP/d\mu$, thereby enabling the use of model
spaces with weaker topologies.  (In order for $\cd_{\rm KL}$ to be of class $C^k$, it
suffices to use the Lebesgue model space $L^p(\mu)$ with $p=k+1$.)  The balanced $L^p$
manifold, $M_B$, admits {\em mixture} and {\em exponential} representations
$m,e:M_B\rightarrow L^p(\mu)$, defined by $m(P)=dP/d\mu-1$ and
$e(P)=\log dP/d\mu-\Emu \log dP/d\mu$.  Like $\eta_Q$ on $M_O$, these are injective but
not homeomorphic, and so cannot be used as charts.  The Hilbert case, in which $p=2$, is
developed in detail in \cite{newt4}.

The exponential Orlicz and balanced $L^p$ manifolds (for $p\ge 2$) support the
infinite-dimensional variant of the Fisher-Rao metric, and (for $p\ge 3$) the
infinite-dimensional variant of the {\em Amari-Chentsov tensor}.  The latter can be used
to define {$\alpha$-derivatives} on particular {\em statistical bundles} \cite{gipi1}.
However, with the exception of the case $\alpha=1$ on the exponential Orlicz manifold,
these bundles differ from the tangent bundle, and so the $\alpha$-derivatives do not
constitute covariant derivatives in the usual sense.  This fact is intimately
connected with the non-homeomorphic nature of $\eta_Q$ on $M_O$, and $m$ and $e$ on $M_B$.

In \cite{ajls1}, the authors define a very general notion of {\em statistical model}.
This is a manifold equipped with a metric and symmetric 3-tensor, together with an
embedding into a space of finite measures, such that these become the Fisher-Rao metric
and Amari-Chentsov tensor.  They extend a result of Chentsov (on the uniqueness of
these tensors as invariants under sufficient statistics) to this much wider class of
statistical models.  The exponential Orlicz and balanced $L^p$ manifolds (for $p\ge 3$)
all fit within this framework.

The topologies of these manifolds (like those of all manifolds of ``pure'' information
geometry) have no direct connection with any topology that the underlying space
$(\bX,\cx,\mu)$ may possess.  They concern statistical inference in its barest form --
statistical divergences measure dependency between random variables without recourse
to structures in their range spaces any richer than a $\sigma$-algebra of events.
Nevertheless, metrics, topologies and linear structures on $\bX$ play important roles
in many applications.  In {\em maximum likelihood estimation}, for example, it is
desirable for the likelihood function associated with a finite sample to be continuous.
It is, therefore, of interest to develop statistical manifolds that embrace both
topologies.  This is a central aim here; we incorporate the topology of $\bX$ by using
appropriate model space norms. A different approach is pursued in \cite{fuku1}. The
exponential manifolds developed there admit, by construction, continuous evaluation maps
(such as the likelihood function) since they are based on reproducing kernel Hilbert
space methods.  However, like the exponential Orlicz and balanced $L^p$ manifolds, they
do not fully accommodate the affine structure associated with the density.

In developing this work, the author was motivated by problems in Bayesian estimation, in
which posterior distributions must be computed from priors and partial observations.
Suppose, for example, that $X:\Omega\rightarrow\bX$ and $Y:\Omega\rightarrow\bY$ are
random variables defined on a common probability space $(\Omega,\ccf,\PR)$, and taking
values in metric spaces $\bX$ and $\bY$, respectively.  Let $\cx$ be the
$\sigma$-algebra of Borel subsets of $\bX$, and let $\cp(\cx)$ be the set of probability
measures on $\cx$.  Under mild conditions we can construct a {\em regular conditional
probability distribution} for $X$ given $Y$, $\Pi:\bY\rightarrow\cp(\cx)$.  (See, for
example, \cite{lish1}.)  This has the key properties that, for each $B\in\cx$,
$\Pi(\cdot)(B):\bY\rightarrow [0,1]$ is measurable and $P(X\in B|Y)=\Pi(Y)(B)$.  In
many instances of this problem, the image $\Pi(\bY)$ is contained in an
infinite-dimensional statistical manifold, and particular statistical divergences
defined on the manifold can be interpreted as ``multi-objective'' measures of error in
approximations to $\Pi$.  For example, Pearson's $\chi^2$-divergence can be interpreted
as a multi-objective measure of the normalised mean-squared errors in estimates of
real-valued variates, $f(X)$ \cite{newt7}.
\begin{eqnarray}
\cd_{\chi^2}(P\,|\,Q)
  & := & \half\|dP/dQ-1\|_{L_0^2(Q)}^2
         = \half\sup_{f\in F}(\E_Q (dP/dQ-1)f)^2 \nonumber \\
         [-1.5ex] \label{eq:chisqd} \\ [-1.5ex]
  &  = & \half\sup_{f\in L^2(Q)}\frac{(\E_P f-\E_Q f)^2}{\E_Q(f-\E_Q f)^2}, \nonumber
\end{eqnarray}
where $L_0^2(Q)$ is the set of $Q$-square integrable, real-valued functions having
mean zero, and $F$ is the subset of those functions having unit variance. 
If $\Pihat:\bY\rightarrow\cp(\cx)$ is used as an approximation to $\Pi$, and
$\E_{\Pihat(Y)}f$ as an approximation of the variate $f(X)$, then the mean-squared error
admits the orthogonal decomposition:
\begin{equation}
\bE\left(f(X)-\E_{\Pihat(Y)}f\right)^2
  = \bE \E_{\Pi(Y)}\left(f-\E_{\Pi(Y)}f\right)^2
    + \bE\left(\E_{\Pihat(Y)}f-\E_{\Pi(Y)}f\right)^2. \label{eq:l2decom}
\end{equation}
The first term on the right-hand side here is the {\em statistical error} arising from
the limitations of the observation $Y$, whereas the second term is the approximation
error arising from the use of $\Pihat$ instead of $\Pi$. Since there is no point in
approximating $\E_{\Pi(Y)}f$ with great accuracy if it is itself a poor estimate of $f(X)$, it is appropriate to consider the magnitude of the second term {\em relative}
to that of the first.   As the final term in (\ref{eq:chisqd}) shows, the
$\chi^2$-divergence, $\cd_{\chi^2}(\Pihat(Y)\,|\,\Pi(Y))$, selects the worst of these
relative errors among the square-integrable variates $f(X)$.  The divergences
$\cd_{KL}$ and $\cd_{\chi^2}$ are both members of the one-parameter family of
{\em $\alpha$-divergences}, $(\cd^\alpha,\alpha\in\R)$ \cite{amar1}.  (In fact
$\cd_{KL}=\cd^{-1}$ and $\cd_{\chi^2}=\cd^{-3}$.)

Bayesian problems, in which a Markov process $(X_t,0\le t<\infty)$ has to be estimated
at each time $t$, on the basis of the history of an observation process
$(Y_s,0\le s\le t)$, are known as problems in {\em nonlinear filtering}. Regular
conditional distributions for $X$ are then time dependent, and can often be represented
as solutions of stochastic partial differential equations \cite{lish1}.  Suppose, for
example, that $X$ is an $\R^d$-valued diffusion process with drift coefficient
$b:\R^d\rightarrow\R^d$ and positive semi-definite diffusion coefficient
$a:\R^d\rightarrow\R^{d\times d}$.  Suppose, further, that $Y$ is a real-valued
partial observation process of the type
\begin{equation}
Y_t = \int_0^t h(X_s)\,ds + W_t, \quad 0\le t<\infty ,\label{eq:obsproc}
\end{equation}
where $h:\R^d\rightarrow\R$ is a measurable function and $W$ is a Brownian motion,
independent of $X$.  Under suitable regularity constraints on $a$, $b$ and $h$, the
$(Y_s,0\le s\le t)$-conditional distribution of $X_t$ has a density $\pi_t$ satisfying
the {\em Kushner-Stratonovich equation} \cite{lish1}:
\begin{equation}
\pi_t = \pi_0 + \int_0^t \ca\pi_s\,ds + \int_0^t \pi_s(h-\bar{h}_s)(dY_s-\bar{h}_s\,ds),
        \label{eq:kusstra}
\end{equation}
where $\ca$ is the Kolmogorov forward (Fokker-Planck) operator for $X$, and $\bar{h}_s$
is the conditional mean of $h$:
\begin{equation}
\ca f = \half\sum_{i,j=1}^d\frac{\partial^2(a_{ij}f)}{\partial x_i\partial x_j}
        - \sum_{i=1}^d\frac{\partial(b_i f)}{\partial x_i}\quad{\rm and}\quad
\bar{h}_s = \int h(x)\pi_s(x)\,dx. \label{eq:forobs}
\end{equation}

If posterior distributions of a nonlinear filter stay on a statistical manifold, then it
is possible to use the methods of information geometry to study its information-theoretic
properties, and to develop approximations based on finite-dimensional submanifolds.
These ideas are developed in \cite{brpi1} in the context of the exponential Orlicz
manifold, and in \cite{newt5} in the context of the balanced Hilbert manifold.  However
these manifolds are not suited to the quest for infinite dimensional evolution equations
since they are constructed without reference to the topology of $\R^d$, which is clearly
needed in any vector field representation of the first integral in (\ref{eq:kusstra}).
To overcome this problem, we need a statistical manifold with a model space whose members
satisfy suitable differentiability constraints.  In \cite{lopi1}, the authors define an
infinite-dimensional statistical manifold modelled on a weighted Orlicz-Sobolev space,
and use it to study the Boltzmann equation.  This manifold can also be used to study weak
solutions of (\ref{eq:kusstra}).

The two integral terms on the right-hand side of (\ref{eq:kusstra}) are {\em mixture
affine} (respectively {\em exponential affine}), in the sense that the integrands are
affine maps in the $\eta_Q$/$m$ (respectively $s_Q$/$e$) representations.  In the quest
for evolution equations, it is therefore advantageous to use a manifold that admits both
these representations as charts.  Such a manifold is constructed here;  it comprises
probability measures whose log-densities with respect to a reference measure are of
class $C_b^k$.  In particular, their densities have strictly positive infima, which is
a significant restriction if, for example, $\bX=\R^d$.  This constraint can be
thought of as an infinite-dimensional equivalent of the positivity constraint placed on
the probabilities of all atoms in the finite sample space setting.  (See section 2.5
in \cite{amar1}.) As in that setting, the removal of this constraint would add a boundary
to the manifold, on which certain divergences are singular.  The manifolds constructed
are suited to nonlinear filtering problems in which the process $X$ is constrained to lie
in a bounded subset of $\R^d$ by certain types of boundary condition, such as reflective
(Neumann) or periodic conditions.  (See Remark \ref{re:variants}(iv), below.)  If the
boundaries are sufficiently far from the origin, then problems with boundaries may be
just as good models for physical systems as those without.  If, for example, the drift
and diffusion coefficients $b$ and $a$ are bounded, and $a$ is strictly positive definite,
then posterior densities are known to have Gaussian tails.  Such a density may be no more
accurate a representation of reality 10 standard deviations from its mean than a
density coming from a model that incorporates a reflective boundary at this point. 

Aside from applications to nonlinear filtering, it is of fundamental interest to develop
non-parametric statistical manifolds that admit the full geometry of Amari's
$\alpha$-covariant derivatives---something that is not achieved in the manifolds
described above.

The paper is structured as follows.  Sections \ref{se:modspa} and \ref{se:manfin}
construct $M$, a smooth manifold of {\em finite} measures on an open subset of a
{\em Banach space} $\bX$, whose densities with respect to a reference measure are of
class $C_b^k$.  $M$ is covered by each chart in a one-parameter family
$(\phialp, \alpha\in\R)$.  The charts $\phialp$ and $\phimalp$ map to open subsets of
the affine spaces of a Fenchel-Legendre transform involving the $\alpha$-divergences
$\cdal$ and $\cdmal$.  As such, they define a metric and dual notions of parallel
transport on the tangent bundle for each $\alpha\in\R$.  Since all these charts are
global, $M$ is flat with respect to the associated covariant derivatives,
$(\nabla^\alpha,\alpha\in\R)$.  Section \ref{se:manprob} considers the subset of
{\em probability} measures, $N$.  This is a $C^\infty$-embedded submanifold of $M$,
from which it inherits its important properties.  In particular, the projection of the
metric and covariant derivatives of $M$ onto $N$ yields the Fisher-Rao metric, and the
$\alpha$-covariant derivatives on $N$.  In contrast with the manifolds of
\cite{cepi1,gipi1,lopi1,newt4,newt7,piro1,pise1}, the latter are all defined on the tangent
bundle of $N$. Of course, this extra regularity is gained at the cost of inclusiveness.
$N$ is (dually) flat in the $\alpha = \pm 1$-covariant derivatives, and admits affine
mixture and exponential charts.  Finally, section \ref{se:smooth} uses the method
of projective limits to extend these results to manifolds of {\em smooth} densities.
The idea of embedding non-parametric statistical manifolds in manifolds of finite
measures is not new. (See, for example, \cite{ajls1} and \cite{newt7}.)  However, the
fact that the ambient manifold here admits a multiplication operator ((\ref{eq:multop})
below) allows the global $\alpha$-geometry of the statistical manifold to be obtained
in an extrinsic manner.

In recent related work \cite{babm1,brmi1}, the authors construct a manifold of smooth
densities on an underlying finite-dimensional manifold by considering such densities to
be smooth sections of the associated {\em volume bundle}. (This is a vector bundle of
dimension 1 that endows the underlying manifold with an intrinsic notion of volume.)
They consider a property of invariance of Riemannian metrics under the diffeomorphism
group of the underlying manifold, and construct the class of all metrics with this
property.  When restricted to the submanifold of {\em probability measures}, these
all coincide (modulo scaling) with the Fisher-Rao metric.  In \cite{brmi1}, they
develop the Levi-Civita covariant derivative and carry out a number of extensions and
completions of the manifold in order to study its global geometry. 

\section{The exponential map} \label{se:modspa}

Let $B$ be an open subset of a Banach space $\bX$, and let
$\cx_B:=\{A\subset B: A\in\cx\}$, where $\cx$ is the Borel $\sigma$-algebra on $\bX$.
Let $\mu$ be a probability measure on $(B,\cx_B)$ with the following property: for any
non-empty open $A\in\cx_B$, $\mu(A)>0$.  (For example, $\bX=\R^d$, $B$ is a bounded open
rectangle, and $\mu$ is normalised Lebesgue measure.)  Let $\bG:=C^k_b(B;\R)$ be the space
of continuous and bounded functions $a:B\rightarrow \R$, that have continuous and bounded
(Fr\'{e}chet) derivatives of all orders up to some $k\in\N_0$. $\bG$ is a Banach space
over $\R$ when endowed with the norm:
\begin{equation}
\|a\|_\bG = \sup_{x\in B}|a(x)| + \sum_{i=1}^k \sup_{x\in B}\|a^{(i)}_x\|_{L(\bX^i;\R)},
            \label{eq:gnorm}
\end{equation}
where $a^{(i)}:B\rightarrow L(\bX^i;\R)$ is the $i$'th derivative of $a$, and
$L(\bX^i;\R)$ is the Banach space of continuous multilinear functions from $\bX^i$ to
$\R$, endowed with the operator norm.  The (continuous bilinear) {\em multiplication
operator} $\pi:\bG\times\bG\rightarrow\bG$, and the (continuous linear) {\em expectation
functional} $\Emu:\bG\rightarrow\R$, are as follows
\begin{equation}
(a\cdot b)(x)
  = \pi(a,b)(x) = a(x)b(x)\quad{\rm and}\quad \Emu a=\int_Ba(x)\mu(dx). \label{eq:multop}
\end{equation}
Equipped with $\pi$, $\bG$ becomes a commutative Banach algebra with identity
$\bfe\equiv 1$.  In the special case that $k=0$, it is a commutative $C^*$-algebra with
involution the identity map.

\begin{proposition} \label{pr:nemexp}
The Nemytskii (superposition) operator, $\exp_\bG:\bG\rightarrow \bG^+$, defined by
$\exp_\bG(a)(x)=\exp_\R(a(x))$, is diffeomorphic onto its image
$\bG^+:=\{a\in\bG: \inf_{x\in B}a(x)>0\}$, and has first derivative
\begin{equation}
\exp_{\bG,a}^{(1)}u = \exp_\bG(a)\cdot u.  \label{eq:expder}
\end{equation}
\end{proposition}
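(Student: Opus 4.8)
The proposition makes three claims: (i) $\exp_\bG$ maps into $\bG^+$ and is a bijection onto it; (ii) it is smooth with the stated first derivative; (iii) its inverse is smooth. The strategy is to treat $\exp_\bG$ as a Nemytskii operator built from the smooth scalar function $\exp_\R$ and verify each claim by reduction to pointwise statements plus uniform control on $B$.

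\emph{Well-definedness and the image.} For $a\in\bG$ we have $a(x)\in[-\|a\|_\bG,\|a\|_\bG]$ for all $x$, so $\exp_\bG(a)$ is bounded with $\inf_x\exp_\R(a(x))\geq\exp_\R(-\|a\|_\bG)>0$; that it lies in $C^k_b$ follows from the chain rule for Fréchet derivatives, which expresses $\exp_\bG(a)^{(i)}_x$ as a universal polynomial (a Faà di Bruno expression) in $\exp_\R(a(x))$ and the derivatives $a^{(1)}_x,\dots,a^{(i)}_x$, each of which is continuous and bounded on $B$. Surjectivity onto $\bG^+$ is immediate: given $b\in\bG^+$, set $a=\log_\R\circ\, b$; since $b$ is bounded away from $0$ and bounded above, $\log_\R$ is applied on a compact subinterval of $(0,\infty)$ on which it is $C^\infty$ with bounded derivatives, so the same Faà di Bruno argument gives $a\in\bG$, and $\exp_\bG(a)=b$. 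Injectivity is pointwise.

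\emph{Smoothness and the derivative formula.} I would show $\exp_\bG$ is Fréchet differentiable at each $a\in\bG$ with $\exp_{\bG,a}^{(1)}u=\exp_\bG(a)\cdot u$ by estimating the remainder $R(u):=\exp_\bG(a+u)-\exp_\bG(a)-\exp_\bG(a)\cdot u$ in the $\bG$-norm. Pointwise, $R(u)(x)=\exp_\R(a(x))\bigl(\exp_\R(u(x))-1-u(x)\bigr)$, which is $O(u(x)^2)$ uniformly in $x$ for $\|u\|_\bG$ small; the derivative terms are handled by differentiating this identity $i$ times via the product and chain rules and using that all relevant scalar functions are Lipschitz with bounded derivatives on the compact range of $a$. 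Since $u\mapsto\exp_\bG(a)\cdot u$ is bounded linear (continuity of $\pi$), this identifies the derivative. Higher derivatives, and hence $C^\infty$-smoothness, follow either by iterating this argument or by invoking a general theorem on the smoothness of Nemytskii operators induced by $C^\infty$ maps between the scalar codomains, which applies because here the scalar generator $\exp_\R$ is entire and the relevant compositions stay in regions where all derivatives are uniformly bounded.

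\emph{Smoothness of the inverse.} The inverse is the Nemytskii operator generated by $\log_\R$, restricted to $\bG^+$. The one subtlety is that $\bG^+$ is open in $\bG$ — this holds because $a\mapsto\inf_x a(x)$ is $1$-Lipschitz, so a small $\bG$-ball around any $b\in\bG^+$ stays in $\bG^+$ — and on each such ball the functions $b(x)$ take values in a fixed compact subinterval $[\delta,M]\subset(0,\infty)$, on which $\log_\R$ and all its derivatives are bounded. The same Faà di Bruno / remainder-estimate argument as above then gives smoothness of $b\mapsto\log_\bG(b)$. The main obstacle, and the place requiring the most care, is the $C^k_b$ bookkeeping: one must check that the Faà di Bruno formula for the $i$-th derivative of a composition, applied termwise, produces genuinely \emph{bounded} multilinear maps and that the remainder estimates survive differentiation up to order $k$ with constants uniform over the relevant range of $a$ (or $b$) — the algebra is routine but must be organized so that continuity of $\pi$ and boundedness of $\Emu$ are the only structural facts invoked.
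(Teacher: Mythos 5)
Your proposal is correct, and its core — the remainder estimate $\exp_\bG(a+u)-\exp_\bG(a)-\exp_\bG(a)\cdot u=O(\|u\|_\bG^2)$ in the $C_b^k$ norm, obtained by differentiating the pointwise identity $\exp_\R(a(x))\bigl(\exp_\R(u(x))-1-u(x)\bigr)$ up to order $k$ with uniform constants — is exactly the paper's argument, which organizes the same bookkeeping into an explicit recursion for the derivatives of the remainder and an induction on the order of differentiation. Where you genuinely diverge is the inverse. The paper shows only that the derivative $u\mapsto\exp_\bG(a)\cdot u$ is a toplinear isomorphism (its inverse being multiplication by $\exp_\bG(-a)$) and then invokes the inverse mapping theorem, which delivers openness of the image and smoothness of the inverse in one stroke; you instead prove directly that $\bG^+$ is open (via the $1$-Lipschitz property of $a\mapsto\inf_x a(x)$) and run the entire Nemytskii analysis a second time for $\log_\R$ on $\bG^+$. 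Your route is more elementary and, incidentally, more explicit about surjectivity onto $\bG^+$ (the observation that $\log_\R\circ b\in\bG$ for $b\in\bG^+$), a point the paper leaves implicit; the cost is that you must redo the Fa\`{a} di Bruno/remainder estimates for a second scalar generator, whereas the inverse-function-theorem route gets the inverse's smoothness for free from the forward map's. Both are sound; just be aware that your "invoke a general theorem on smoothness of Nemytskii operators" fallback for higher-order derivatives is doing the same inductive work the paper spells out, so if you use it you should cite a precise statement applicable to $C_b^k(B;\R)$ with $B$ an open subset of a Banach space.
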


\begin{proof}
Let $F:\bG\times\bG\rightarrow\bG$ be defined by
$F(a,b)=\exp_\bG(b)-\exp_\bG(a)-\exp_\bG(a)\cdot(b-a)$.  In order to prove
(\ref{eq:expder}) it suffices to show that, for any $a\in\bG$, there exists a
$K_a<\infty$ such that
\begin{equation}
\|F(a,b)\|_\bG \le K_a\|b-a\|_\bG^2 \quad{\rm for\ all\ }b\in B(a,1), \label{eq:Fbnd}
\end{equation}
where $B(a,1)$ is the open unit ball centred at $a$.  That this is so when $k=0$
follows from an application of Taylor's theorem to $\exp_\R$.  Suppose that $k\ge 1$.
Fixing $a\neq b\in\bG$, and differentiating $F(a,b)$ with respect to $x$, we obtain
\begin{equation}
F(a,b)_x^{(1)}y = F(a,b)(x)b_x^{(1)}y + H(a,b,x)y, \label{eq:Fderiv}
\end{equation}
where $H:\bG\times\bG\times B\rightarrow L(\bX;\R)$ is defined by
\[
H(a,b,x)y = (\exp_\bG(a)\cdot(b-a))(x)(b_x^{(1)}-a_x^{(1)})y.
\]
An induction argument, starting from (\ref{eq:Fderiv}), shows that, for any $1\le i\le k$,
\begin{equation}
F(a,b)_x^{(i)}\bfy_1^i
  = \sum_{\rho\in S_i}\sum_{j=1}^i\gamma_{i,\rho,j}
      F(a,b)_x^{(i-j)}\bfy_{\rho_{j+1}}^{\rho_i} b_x^{(j)}\bfy_{\rho_1}^{\rho_j}
      + H(a,b,\fndot)_x^{(i-1)}\bfy_1^i, \label{eq:Fexp}
\end{equation}
where $\bfy_m^n:=(y_m,\ldots, y_n)$, $S_i$ is the set of all permutations of the integers
1 to $i$, and  the real constants $\gamma_{i,\rho,j}$ are defined by the following
recursion: $\gamma_{i,\rho,0}=\gamma_{i,\rho,i+1}=0$, $\gamma_{1,\bfe,1}=1$, and for any
$\sigma\in S_{i+1}$ and any $1\le j\le i+1$,
\[
\gamma_{i+1,\sigma,j}
  = \left\{\begin{array}{ll}
      \gamma_{i,\rho,j} \quad & {\rm if}\; \sigma=(\rho,i+1){\rm\; for\ some\ }\rho\in S_i \\
      \gamma_{i,\rho,j-1} & {\rm if}\; \sigma=
         (\rho_1,\cdots,\rho_{j-1},i+1,\rho_j,\cdots, \rho_i)
         {\rm\; for\ some\ }\rho\in S_i \\
      0 & {\rm otherwise}.
      \end{array}\right.
\] 
For any $a\in\bG$, there exists a $K_a<\infty$ such that
\begin{equation}
\sup_{x\in B}\|H(a,b,\fndot)_x^{(i-1)}\|_{L(\bX^i;\R)} \le K_a\|b-a\|_\bG^2
  \quad{\rm for\ all\ }b\in\bG. \label{eq:Hbnd}
\end{equation}
An induction argument on $i$ thus establishes (\ref{eq:Fbnd}), and hence (\ref{eq:expder}).
A further induction argument readily shows that $\exp_\bG\in C^\infty(\bG;\bG^+)$.

For any $a\in\bG$, the linear map $\exp_{\bG,a}^{(1)}:\bG\rightarrow\bG$ of
(\ref{eq:expder}) is clearly a toplinear isomorphism, and so the statement of the
proposition follows from the inverse mapping theorem.
\end{proof}

\begin{remark} \label{re:variants}
\begin{enumerate}
\item[(i)] The crucial feature of this setup is that $\exp_\bG(\bG)$ is an open subset
of $\bG$, which is essential if both mixture and exponential representations are to be
charts.  This property, which is connected with the existence of the multiplication
operator of (\ref{eq:multop}), does not hold if $\bG$ is replaced by the exponential Orlicz
spaces of \cite{pise1,lopi1}, or the Lebesgue $L^p(\mu)$ spaces of \cite{newt4,newt7}.
(See examples 2.1 and 2.2 in \cite{newt4}.)
\item[(ii)] The model space $\bG$ is based on an {\em open} subset of $\bX$ for reasons
of inclusiveness.  The closure of $B$, $\bar{B}$, does not need to be compact. If,
however, $\bar{B}$ {\em is} compact, then the space
$\{a\in\bG: a=b|_B {\rm\ for\ some\ }b\in C^k(\bar{B};R)\}$ is a proper, closed subspace
of $\bG$, and so defines smoothly embedded submanifolds of those constructed in sections
\ref{se:manfin} and \ref{se:manprob}.
\item[(iii)] The measure $\mu$ on $(B,\cx_B)$ must be {\em finite} in order for the
integral functional of (\ref{eq:multop}) to be well defined.  Since the total mass of
$\mu$ does not affect the results that follow, it is natural to assume that it is $1$.
$\mu$ is then, itself, a member of the statistical manifold of section \ref{se:manprob}.
\item[(iv)] The results that follow hold true in other scenarios.  For example, that in
which $\bX=\R^d$, $B=(-\pi,\pi)^d$, $\mu=(2\pi)^{-d}\hbox{\rm Leb}$ and $\bG$ is the
subspace of $C_b^k(B;\R)$ whose members satisfy a suitable periodic boundary condition.
The manifolds constructed then comprise measures defined on the $d$-dimensional torus.
\item[(v)] $\bG$ can also be replaced by $L^\infty(B;\R)$, but no account is then taken
of the topology of $\bX$.  Cf.~\cite{loqu1}, in which the authors follow the approach of
\cite{pise1} to construct a {\em Tsallis $q$-exponential} statistical manifold modelled
on a Banach space of essentially bounded functions.
\end{enumerate}
\end{remark}

\section{The manifold of finite measures} \label{se:manfin}

Let $M$ be the set of finite measures on $(B,\cx_B)$ that are mutually absolutely
continuous with respect to $\mu$, and have densities of the form
\begin{equation}
dP/d\mu = \exp_\bG(a), \quad{\rm for\ some\ }a\in\bG. \label{eq:measure}
\end{equation}
$M$ is covered by the single chart $\phipo:M\rightarrow\bG$, defined by
\[
\phipo(P)=\log_{\bG^+}(dP/d\mu):=\exp_\bG^{-1}(dP/d\mu).
\]
A tangent vector at $P\in M$ is a signed measure on $(B,\cx_B)$ that is absolutely
continuous with respect to $\mu$, and has a density with respect to $P$ of the form
\begin{equation}
dU/dP = u, \quad{\rm for\ some\ }u\in\bG.  \label{eq:tanvec}
\end{equation}
The tangent space at $P$, $T_PM$, is the linear space of all such measures, and the tangent
bundle is the disjoint union $TM:=\bigcup_{P\in M}(P, T_PM)$.  By construction, $TM$ is
globally trivialised by the bijection $\Phipo:TM\rightarrow\bG\times\bG$ defined by
\begin{equation}
\Phipo(P,U) = \left(\log_{\bG^+} dP/d\mu ,\, dU/dP\right). \label{eq:Phidef}
\end{equation}
The derivative of a differentiable, Banach-space-valued map $f:M\rightarrow \bY$
(at $P$ and in the direction $U\in T_PM$) is defined in the obvious way:
\begin{equation}
Uf := (f\circ\phipo^{-1})_a^{(1)}u, \quad{\rm where\ }(a,u)=\Phipo(P,U). \label{eq:tangop}
\end{equation}
For any $\alpha\in\R\setminus\{1\}$, let
$\phialp:M\rightarrow\bG$ be defined as follows:
\begin{equation}
\textstyle \phialp(P)
  = \frac{2}{1-\alpha}\left(\exp_\bG\left(\frac{1-\alpha}{2}\phipo(P)\right)-1\right).
    \label{eq:phialp}
\end{equation}
Proposition \ref{pr:nemexp} shows that the map $\phialp\circ\phipo^{-1}$ is
diffeomorphic onto its image, and so $(\phialp,\; \alpha\in\R)$ is a smooth atlas,
each chart of which covers $M$.  For any $\alpha,\beta\in\R$, the derivative of the
transition map $\phialp\circ\phibet^{-1}$ is,
\begin{equation}
\textstyle(\phialp\circ\phibet^{-1})_a^{(1)}u
  = \exp_\bG\left(\frac{\beta-\alpha}{2}a_1\right)\cdot u, \label{eq:transmap}
\end{equation}
where $a_1=\phipo\circ\phibet^{-1}(a)$.  For each $\alpha\in\R$ the chart
$\Phialp:TM\rightarrow\bG\times\bG$, defined by
\begin{equation}
\Phialp(P,U) = (\phialp(P),U\phialp), \label{eq:Phialpdef}
\end{equation}
induces a distinct global trivialisation of $TM$.  In particular,
$\Phimo(P,U)=(dP/d\mu-1, dU/d\mu)$.

\begin{remark} \label{re:phioff}
\begin{enumerate}
\item [(i)]The maps $\phialp$ are derived from Amari's {\em $\alpha$-embedding maps}.
(See section 2.6 in \cite{amar1}.)  The offset $-1$ is included in (\ref{eq:phialp})
so that $\phialp(\mu)=0$.  This also ensures that
$\phialp\circ\phimo^{-1}\circ(\id_{\bG^+}-1):\bG^+\rightarrow\bG$ is Tsallis'
{\em $q$-deformed logarithm} with $q=(1+\alpha)/2$. (See, for example, chapter 7 of
\cite{naud1}.) It is easily established that, for any fixed $P\in M$, the map
$\R\ni\alpha\mapsto\phialp(P)\in\bG$ is of class $C^\infty$.
\item[(ii)] We introduce multiple charts in order to define different notions of parallel
transport on $TM$.  Maps simlar to $(\phialp, \alpha\in[-1,1])$ are defined and studied
on the exponential Orlicz manifold in \cite{gipi1}, and on the balanced $L^p(\mu)$
manifolds in \cite{newt7}.  However, since they are not diffeomorphic in those contexts,
it is not possible to use them to define parallel transport on the associated tangent
bundles.
\item[(iii)] The charts $\phimo$ and $\phipo$ are particularly important. $\phimo$
reflects the inherent linear structure of a set of measures.  On the other hand, $\phipo$
is surjective, and so trivially introduces a Lie group structure on $M$.  For $P,Q\in M$,
the product $PQ$ and power $P^\lambda$ (for any $\lambda\in\R$) are defined as
follows:
\begin{equation}
\frac{dPQ}{d\mu}
  = \frac{dQP}{d\mu}
  = \left(\frac{dP}{d\mu}\cdot\frac{dQ}{d\mu}\right) \quad{\rm and}\quad
    \frac{dP^\lambda}{d\mu} = \left(\frac{dP}{d\mu}\right)^\lambda, \label{eq:group}
\end{equation}
and the identity is $\mu$. The {\em power of a measure} for $\lambda\in(0,1]$ is
used in \cite{ajls2} to characterise the Fisher-Rao metric and Amari-Chentsov tensor on
parametric statistical manifolds admitting singular measures.  Since the log densities,
here, are bounded, $P^\lambda$ is defined for all real $\lambda$.
\end{enumerate}
\end{remark}

Let $\Gamma TM$ be the space of smooth sections of $TM$ (i.e.~smooth vector fields).
Each chart $\Phialp$ induces a notion of parallel transport on $TM$; tangent vectors in
different fibres of $TM$, $U\in T_PM$ and $\Util\in T_QM$, are $\alpha$-parallel 
transports of each other if $U\phialp=\Util\phialp$.  The associated covariant derivative,
$\nabla^\alpha:\Gamma TM\times\Gamma TM\rightarrow \Gamma TM$, is that for which
$\phialp$ is an affine chart:
\begin{equation}
\nabla_\bfU^\alpha \bfV\phialp = \bfU\bfV\phialp. \label{eq:covder}
\end{equation}
As is the case for all covariant derivatives defined from global affine charts,
$\nabla^\alpha$ is torsion free, and $M$ is $\nabla^\alpha$-flat (or simply
$\alpha$-flat) for all $\alpha\in\R$.  In fact, for any $\bfU,\bfV,\bfW\in\Gamma TM$ and
any $\alpha\in\R$,
\begin{eqnarray}
& & \left(\nabla_\bfU^\alpha \bfV - \nabla_\bfV^\alpha \bfU - [\bfU,\bfV]\right)
    \phialp \nonumber \\
& & \qquad\qquad\qquad = \bfU\bfV\phialp - \bfV\bfU\phialp - [\bfU,\bfV]\phialp = 0,
    \nonumber \\
    [-1.5ex] \label{eq:torcur} \\ [-1.5ex]
& & \left(\nabla_\bfU^\alpha\nabla_\bfV^\alpha\bfW 
      - \nabla_\bfV^\alpha\nabla_\bfU^\alpha\bfW
      - \nabla_{[\bfU,\bfV]}^\alpha\bfW\right)\phialp \nonumber \\
& & \qquad\qquad\qquad  = \bfU\bfV\bfW\phialp - \bfV\bfU\bfW\phialp
      - [\bfU,\bfV]\bfW\phialp = 0, \nonumber
\end{eqnarray}
where $[\cdot,\cdot]$ is the Lie bracket.  $\alpha$-geodesics are curves of $M$ whose
$\phialp$-representations are straight lines in $\bG$.

We define a weak Riemannian metric on $M$ via the inclusion $\bG\subset  L^2(P)$: for
any $U,V\in T_PM$,
\begin{equation}
\langle U, V \rangle_P
  := \langle U\phipo, V\phipo\rangle_{L^2(P)}
   = \langle U\phialp, V\phimalp\rangle_\LL \quad{\rm for\ all\ }\alpha\in\R,
     \label{eq:riemet}
\end{equation}
where we have used (\ref{eq:transmap}) in the second step.  This is positive definite
since $P(A)>0$ for any non-empty open set $A\in\cx_B$.  (It is not a {\em strong}
Riemannian metric since there are members of the cotangent space that do not admit the
representation $\langle U,\cdot\rangle_P:T_PM\rightarrow\R$ for some $U\in T_PM$.)

As is clear from (\ref{eq:riemet}), if $\Util,\Vtil\in T_QM$ are obtained by parallel
transport of $U,V\in T_PM$, one according $\Phialp$ and the other according to $\Phimalp$,
then $\langle\Util,\Vtil\rangle_Q=\langle U,V\rangle_P$.  In this sense $\nabla^\alpha$
and $\nabla^{-\alpha}$ are {\em dual} with respect to the metric.  This can be expressed
in differential form as follows: for any $\bfU,\bfV,\bfW\in\Gamma TM$,
\begin{eqnarray}
\bfU\langle \bfV, \bfW\rangle
  & = & \bfU\langle\bfV\phialp, \bfW\phimalp\rangle_\LL \nonumber \\
  & = & \langle\bfU\bfV\phialp, \bfW\phimalp\rangle_\LL
        + \langle\bfV\phialp, \bfU\bfW\phimalp\rangle_\LL
         \label{eq:covdual} \\
  & = & \langle \nabla_\bfU^\alpha \bfV, \bfW \rangle
        + \langle \bfV, \nabla_\bfU^{-\alpha}\bfW \rangle. \nonumber
\end{eqnarray}
(Cf.~the finite-dimensional case \cite{amar1}.)  Being self-dual and torsion free,
$\nabla^0$ is the Levi-Civita covariant derivative associated with the metric.  The
linear relation between the $\alpha$-covariant derivatives is also retained:
\begin{equation}
\nabla^\alpha
  = \textstyle\frac{1-\alpha}{2}\nabla^{-1} + \frac{1+\alpha}{2}\nabla^1. \label{eq:linrel}
\end{equation}
This follows from (\ref{eq:transmap}), which shows that
\begin{eqnarray*}
\nabla_\bfU^{\pm 1}\bfV\phialp
  & = & (\phialp\circ\phi_{\pm 1}^{-1})_{\phi_{\pm 1}}^{(1)}\nabla_\bfU^{\pm 1}\bfV
        \phi_{\pm 1} \\
  & = & (\phialp\circ\phi_{\pm 1}^{-1})_{\phi_{\pm 1}}^{(1)}
        \bfU\left[(\phi_{\pm 1}\circ\phialp^{-1})_{\phialp}^{(1)}\bfV\phialp\right] \\
  & = & \textstyle\bfU\bfV\phialp + \frac{\alpha\mp 1}{2}\bfU\phipo\cdot\bfV\phialp.
\end{eqnarray*}

\subsection{The $\alpha$-divergences} \label{se:alpdiv}

These are defined on $M$ as follows. (Cf.~section 3.6 in \cite{amar1}.)
\begin{eqnarray}
\cdmo(P\,|\,Q)
  & = & \cdpo(Q\,|\,P) \nonumber \\
        [-1.5ex] \label{eq:alpdiv1} \\ [-1.5ex]
  & = & \Emu(\phimo(Q)-\phimo(P))
        + \langle\phimo(P)+1, \phipo(P)-\phipo(Q)\rangle_\LL, \nonumber
\end{eqnarray}
and, for $\alpha\neq\pm 1$,
\begin{eqnarray}
\cdal(P\,|\,Q)
  & = & \textstyle \frac{2}{1+\alpha}\Emu(\phimo(P)-\phialp(P)) \nonumber \\
        [-1.5ex] \label{eq:alpdiv2} \\ [-1.5ex]
  &   & \quad\textstyle + \frac{2}{1-\alpha}\Emu(\phimo(Q)-\phimalp(Q))
        - \langle\phialp(P), \phimalp(Q)\rangle_\LL. \nonumber 
\end{eqnarray}
It follows from Proposition \ref{pr:nemexp} that $\cdal\in C^\infty(M\times M;\R)$.
The following proposition summarises some other properties.

\begin{proposition} \label{pr:legend}
For any $\alpha\in\R$:
\begin{enumerate}
\item[(i)] $\cdmal(P\,|\,Q)=\cdal(Q\,|\,P)\ge 0$, with equality if and only if $P=Q$;
\item[(ii)] the following {\em generalised cosine rule} applies
\begin{eqnarray}
\cdal(P\,|\,R)
  & = & \cdal(P\,|\,Q) + \cdal(Q\,|\,R) \nonumber \\
        [-1.5ex] \label{eq:cosine} \\ [-1.5ex]
  &   & \quad - \langle \phialp(P)-\phialp(Q), \phimalp(R)-\phimalp(Q)\rangle_\LL;
        \nonumber
\end{eqnarray}
\item[(iii)] the set $\phialp(M)$ is convex;
\item[(iv)] for any $Q\in M$, the function
$\cdal(\phialp^{-1}|\,Q):\phialp(M)\rightarrow \R$ admits the following derivatives
\begin{eqnarray}
\cdal(\phialp^{-1}|\,Q)_a^{(1)}u
  & = & \langle \phimalp\circ\phialp^{-1}(a)-\phimalp(Q), u \rangle_\LL
        \label{eq:divder1} \\
\cdal(\phialp^{-1}|\,Q)_a^{(2)}(u, v)
  & = & \Emu\exp_\bG(\alpha\phipo\circ\phialp^{-1}(a))\cdot u\cdot v; \label{eq:divder2}
\end{eqnarray}
in particular $\cdal(\phialp^{-1}|\,Q)$ is strictly convex;
\item[(v)] for any $Q\in M$, $a\in\phimalp(M)$,
\begin{eqnarray}
\cdmal(\phimalp^{-1}(a)\,|\,Q)
  & = & \max_{b\in\phialp(M)}\big\{\langle a-\phimalp(Q), b-\phialp(Q)\rangle_\LL
        \nonumber \\
        [-1.6ex] \label{eq:legend} \\ [-1.6ex]
  &   & \qquad\qquad\quad - \cdal(\phialp^{-1}(b)\,|\,Q)\big\}, \nonumber
\end{eqnarray}
and the unique maximiser is $\phialp\circ\phimalp^{-1}(a)$.
\end{enumerate}
\end{proposition}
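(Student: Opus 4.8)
The plan is to make part (iv) the computational hub and to deduce the remaining parts from it together with the single scalar identity $\cdal(Q\,|\,Q)=0$. Two preliminaries are established first. (a) For $\alpha\neq 1$ the map $f\mapsto f^{(1-\alpha)/2}=\exp_\bG\bigl(\frac{1-\alpha}{2}\exp_\bG^{-1}f\bigr)$ is a bijection of $\bG^+$ onto itself by Proposition \ref{pr:nemexp}, so $\phialp(M)=\{\frac{2}{1-\alpha}(g-1):g\in\bG^+\}$; as an affine image of the convex open set $\bG^+$ this is convex and open, which is part (iii) (while $\phipo(M)=\bG$). (b) Writing $p=dP/d\mu$ one has $\phimo(P)=p-1$, $\phialp(P)=\frac{2}{1-\alpha}(p^{(1-\alpha)/2}-1)$ and $\phimalp(P)=\frac{2}{1+\alpha}(p^{(1+\alpha)/2}-1)$; substituting these with $P=Q$ into (\ref{eq:alpdiv2}) and using $\frac{2}{1+\alpha}+\frac{2}{1-\alpha}=\frac{4}{1-\alpha^2}$, the integrand cancels pointwise on $B$, so $\cdal(Q\,|\,Q)=0$. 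The exceptional values $\alpha=\pm 1$ reduce the same way from (\ref{eq:alpdiv1}).

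For part (iv) one differentiates $b\mapsto\cdal(\phialp^{-1}(b)\,|\,Q)$ in (\ref{eq:alpdiv2}); only the terms $\frac{2}{1+\alpha}\Emu\bigl((\phimo-\phialp)\circ\phialp^{-1}(b)\bigr)$ and $-\langle b,\phimalp(Q)\rangle_\LL$ depend on $b$. Differentiating $\phimo\circ\phialp^{-1}$ by the transition formula (\ref{eq:transmap}) (with its pair of chart indices equal to $(-1,\alpha)$, so that the shift is $(1+\alpha)/2$) gives $(\phimo\circ\phialp^{-1})^{(1)}_b u=\exp_\bG\bigl(\frac{1+\alpha}{2}\phipo\circ\phialp^{-1}(b)\bigr)\cdot u=p^{(1+\alpha)/2}\cdot u$ for $P=\phialp^{-1}(b)$; since $\frac{2}{1+\alpha}(p^{(1+\alpha)/2}-1)=\phimalp\circ\phialp^{-1}(b)$, and since $-\langle b,\phimalp(Q)\rangle_\LL$ contributes $-\langle\phimalp(Q),u\rangle_\LL$, the first derivative equals $\langle\phimalp\circ\phialp^{-1}(b)-\phimalp(Q),u\rangle_\LL$, i.e.\ (\ref{eq:divder1}). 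Differentiating once more, now using (\ref{eq:transmap}) with chart indices $(-\alpha,\alpha)$ (shift $\alpha$) to get $(\phimalp\circ\phialp^{-1})^{(1)}_b v=\exp_\bG(\alpha\,\phipo\circ\phialp^{-1}(b))\cdot v$, yields (\ref{eq:divder2}). Strict convexity follows at once: for $0\neq u\in\bG$ the form $\Emu\exp_\bG(\alpha\,\phipo\circ\phialp^{-1}(b))\cdot u^2$ is strictly positive, because $\exp_\bG(\cdot)$ has strictly positive infimum while $u^2$ is continuous, non-negative and positive on a non-empty open set, which $\mu$ charges.

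The remaining parts now assemble. Part (i): the identity $\cdmal(P\,|\,Q)=\cdal(Q\,|\,P)$ is read off by interchanging $\alpha\leftrightarrow-\alpha$ and $P\leftrightarrow Q$ in (\ref{eq:alpdiv2}) (resp.\ (\ref{eq:alpdiv1})) and using symmetry of $\langle\cdot,\cdot\rangle_\LL$; non-negativity holds because, by (iv), $\cdal(\phialp^{-1}(\cdot)\,|\,Q)$ is $C^2$ and strictly convex on the convex set $\phialp(M)$, and by (\ref{eq:divder1}) its derivative vanishes at $b=\phialp(Q)$, so $\phialp(Q)$ is the unique global minimiser with value $\cdal(Q\,|\,Q)=0$, whence equality holds iff $P=Q$. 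Part (ii): abbreviating $A(P):=\frac{2}{1+\alpha}\Emu(\phimo(P)-\phialp(P))$ and $B(R):=\frac{2}{1-\alpha}\Emu(\phimo(R)-\phimalp(R))$, so that $\cdal(P\,|\,R)=A(P)+B(R)-\langle\phialp(P),\phimalp(R)\rangle_\LL$, one substitutes this into both sides of (\ref{eq:cosine}) and expands the bilinear term; all contributions cancel except a leftover $\cdal(Q\,|\,Q)=0$ (again extending to $\alpha=\pm 1$ by continuity). Part (v): the functional $g(b):=\langle a-\phimalp(Q),b-\phialp(Q)\rangle_\LL-\cdal(\phialp^{-1}(b)\,|\,Q)$ on $\phialp(M)$ is strictly concave by (iv), and $g^{(1)}_b u=\langle a-\phimalp\circ\phialp^{-1}(b),u\rangle_\LL$ vanishes exactly at $b^\ast=\phialp\circ\phimalp^{-1}(a)$ (which lies in $\phialp(M)$ since $a\in\phimalp(M)$), so $b^\ast$ is the unique maximiser; with $R:=\phimalp^{-1}(a)$, the cosine rule (ii) applied with $P=R$ (using $\cdal(R\,|\,R)=0$) gives $\langle\phimalp(R)-\phimalp(Q),\phialp(R)-\phialp(Q)\rangle_\LL=\cdal(R\,|\,Q)+\cdal(Q\,|\,R)$, so $g(b^\ast)=\cdal(Q\,|\,R)=\cdmal(R\,|\,Q)$ by (i), which is (\ref{eq:legend}).

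No substantial obstacle is expected; morally the proposition is just the Fenchel--Legendre package carried by the strictly convex potential $b\mapsto\frac{2}{1+\alpha}\Emu\bigl((\phimo-\phialp)\circ\phialp^{-1}(b)\bigr)$ on $\phialp(M)$, with its Legendre dual evaluated at $\phimalp(Q)$, specialised to the Banach algebra $\bG$. The two points needing care are the exponent bookkeeping when invoking (\ref{eq:transmap}) --- one must feed it the correct pair of chart indices and use $\exp_\bG(c\,\phipo(P))=(dP/d\mu)^c$ throughout --- and the exceptional charts $\alpha=\pm 1$: the cleanest treatment is to observe (as noted after (\ref{eq:phialp})) that $\alpha\mapsto\phialp(P)$, hence $\alpha\mapsto\cdal(P\,|\,Q)$, is $C^\infty$, so that the algebraic identities proved for $\alpha\in\R\setminus\{-1,1\}$ in (i), (ii) and (v) extend to $\alpha=\pm 1$ by continuity.
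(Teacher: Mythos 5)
Your proposal is correct, and where the paper gives details your argument differs in instructive ways. For part (iii) the paper takes the convex combination $a_t=(1-t)\phialp(P_0)+t\phialp(P_1)$ and exhibits the density $p_t=\bigl(1+\frac{1-\alpha}{2}a_t\bigr)^{2/(1-\alpha)}=\bigl((1-t)p_0^{(1-\alpha)/2}+tp_1^{(1-\alpha)/2}\bigr)^{2/(1-\alpha)}$, checking that the bracketed term has strictly positive infimum so that $p_t$ is the density of some $P_t\in M$ with $\phialp(P_t)=a_t$; your identification of $\phialp(M)$ as the affine image $\frac{2}{1-\alpha}(\bG^+-1)$ of the convex open set $\bG^+$ is the same fact packaged more cleanly, and additionally gives openness for free. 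For part (v) the paper rewrites the whole functional $f(b)$ via the cosine rule and part (i) as $\cdmal(\phimalp^{-1}(a)\,|\,Q)-\cdmal(\phimalp^{-1}(a)\,|\,\phialp^{-1}(b))$, so that the maximum value and the unique maximiser both drop out of the nonnegativity statement in (i) without any calculus; your route through strict concavity and the vanishing of $g^{(1)}$ at $b^\ast$ reaches the same conclusion but needs the separate cosine-rule evaluation of $g(b^\ast)$. Your logical ordering is also reversed relative to the paper's: you derive the nonnegativity and equality case in (i) from the strict convexity in (iv), whereas the paper treats (i) as an independent ``straightforward calculation''; your ordering is perfectly sound and arguably more economical, since one convexity argument serves (i), (iv) and (v) simultaneously.

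One small point of care: your derivation of (iv) differentiates (\ref{eq:alpdiv2}), which is only the definition for $\alpha\neq\pm 1$, yet (\ref{eq:divder1})--(\ref{eq:divder2}) are asserted for all $\alpha$, and you later lean on the strict convexity in (iv) to get the equality case of (i) at $\alpha=\pm 1$ (continuity in $\alpha$ alone would give $\cdal\ge 0$ but not the strictness). You should therefore note explicitly that the same differentiation applied to (\ref{eq:alpdiv1}) yields (\ref{eq:divder1})--(\ref{eq:divder2}) at $\alpha=\pm 1$ (e.g.\ for $\alpha=1$ the $b$-dependent part of $\cdpo(\phipo^{-1}(b)\,|\,Q)$ is $\Emu\exp_\bG(b)-\langle\phimo(Q)+1,b\rangle_\LL$, whose first and second derivatives are exactly the claimed expressions). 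This is routine and entirely in the spirit of the paper, which dismisses (i), (ii) and (iv) as straightforward calculations, but it closes the only loose end in your write-up.
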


\begin{proof}
Parts (i), (ii) and (iv) can be proven by straightforward calculations.  Part (iii) is
trivial when $\alpha=1$, since $\phipo(M)=\bG$.   Suppose, then, that
$\alpha\in\R\setminus\{1\}$.  For any distinct $P_0,P_1\in M$, and any $t\in(0,1)$, let
$a_t := (1-t)\phialp(P_0) + t\phialp(P_1)$; then we can define
\[
\textstyle p_t
  = \left(1+\frac{1-\alpha}{2}a_t\right)^{2/(1-\alpha)}
  = \left((1-t)p_0^{(1-\alpha)/2}+tp_1^{(1-\alpha)/2}\right)^{2/(1-\alpha)},
\]
where $p_i:=dP_i/d\mu$, $i=0,1$.
Since the infimum (over $x\in B$) of the term in brackets on the right-hand side here
is strictly positive, $\log p_t$ is well defined and bounded.  $p_t$ is thus the density
of a measure $P_t\in M$, and $\phialp(P_t)=a_t$, which completes the proof of part (iii).

Let $a\in\phimalp(M)$, and let $f:\phialp(M)\rightarrow\R$ be defined as follows:
\begin{eqnarray*}
f(b) & = & \langle a-\phimalp(Q), b-\phialp(Q)\rangle_\LL-\cdal(\phialp^{-1}(b)\,|\,Q) \\
     & = & \cdal(Q\,|\,\phimalp^{-1}(a)) - \cdal(\phialp^{-1}(b)\,|\,\phimalp^{-1}(a)) \\
     & = & \cdmal(\phimalp^{-1}(a)\,|\,Q) - \cdmal(\phimalp^{-1}(a)\,|\,\phialp^{-1}(b)),
\end{eqnarray*}
where we have used (\ref{eq:cosine}) in the second step and part (i) in the
third step.  Part (v) now follows from part (i).
\end{proof}

It follows from (\ref{eq:riemet}), (\ref{eq:divder2}) and (\ref{eq:transmap}) that, for
any $P\in M$ and $U, V\in T_PM$,
\begin{equation}
\langle U, V \rangle_P
  = \cdal(\phialp^{-1}| P)_{\phialp(P)}^{(2)}(U\phialp, V\phialp), \label{eq:methes}
\end{equation}
confirming the Hessian nature of the metric.  Furthermore,
\[
\cdal(\phialp^{-1}|\phialp^{-1})_{a,a}^{(2,1)}(u,v;\cdot) \equiv 0,
\]
for all $(a,u), (a,v)\in\Phialp(TM)$. Since the metric is positive definite, the
only tangent vector $X\in T_PM$, for which
\begin{equation}
\langle X, W\rangle_P
   = \cdal(\phialp^{-1}|\phialp^{-1})_{a,a}^{(2,1)}(u, v; W\phialp)
     \quad {\rm for\ all\ }W\in T_PM, \label{eq:projder}
\end{equation}
is the zero vector, and this provides further justification for the definition of the
covariant derivative in (\ref{eq:covder}).  (Cf.~the finite-dimensional case in section
3.2 of \cite{amar1}.)

\section{The manifold of probability measures} \label{se:manprob}

Let $\bG_0:=\{a\in\bG:\Emu a=0\}$, let $N:=\{P\in M: P(B)=1\}$, and let
$\phim:N\rightarrow\bG_0$ be the restriction of $\phimo$ to $N$. $N$ is a statistical
manifold modelled on $\bG_0$, with global {\em mixture chart} $\phim$.  It is trivially
a $C^\infty$-embedded submanifold of $M$.  A tangent vector at $P\in N$ is a signed
measure in $T_PM$ that has total mass zero.  The tangent bundle, $TN$, is trivialised by
the global chart $\Phim:TN\rightarrow\bG_0\times\bG_0$, defined to be the restriction of
$\Phimo$ to $TN$.  We can also define an {\em exponential chart},
$\Phie:TN\rightarrow\bG_0\times\bG_0$, as
follows:
\begin{equation}
\Phie(P,U)
  = (\phie(P),U\phie)
  = \left(\phipo(P)-\Emu\phipo(P),\, dU/dP-\Emu dU/dP\right). \label{eq:Phiedef}
\end{equation}

$M$ and $N$ are connected by the {\em normalisation map} $\nu:M\rightarrow N$
($\nu(P):=P/P(B)$) and the {\em inclusion map}, $\imath:N\rightarrow M$.  The
associated tangent maps, $T\nu$ and $T\imath$, have particularly simple
representations in terms of the charts $\Phipo$ and $\Phie$:
\begin{eqnarray}
\Phie\circ T\nu\circ\Phipo^{-1}(a,u)
  & = & (a-\Emu a, u-\Emu u) \nonumber \\
        [-1.5ex] \label{eq:norminc} \\ [-1.5ex]
\Phipo\circ T\imath\circ\Phie^{-1}(b,v)
  & = & \left(b-\log\Emu\exp_\bG(b), v-\EP v\right), \nonumber
\end{eqnarray}
where $\EP$ is expectation with respect to $P=\phie^{-1}(b)$. So
\begin{equation}
\Phipo\circ Ti\circ T\nu\circ \Phipo^{-1}(a, u) = (a-\log\Emu\exp_\bG(a), u-\EP u),
\end{equation}
where $P=\nu\circ\phipo^{-1}(a)$.  A tangent vector $V\in T_PM$ at $P\in N$ is in $T_PN$
if and only if $\EP V\phipo = \EP dU/dP = 0$.  So, for any $P\in N$, $U\in T_PM$ and
$V\in T_PN$,
\begin{eqnarray}
\langle U, V \rangle_P
  & = & \langle U\phipo, V\phipo\rangle_{L^2(P)}
        = \langle U\phipo-\EP U\phipo, V\phipo\rangle_{L^2(P)} \nonumber \\
        [-1.5ex] \label{eq:project} \\ [-1.5ex]
  & = & \langle T\imath\circ T\nu U\phipo, V\phipo\rangle_{L^2(P)}
        = \langle T\nu U, V\rangle_P, \nonumber
\end{eqnarray}
which shows that $T\nu U$ is the projection of $U$ onto $T_PN$ in the metric of
(\ref{eq:riemet}). (This corresponds, in the $\phipo$-representation, to projection from
$L^2(P)$ onto the subspace of functions with $P$-mean zero.) More generally,
$T\nu$ effects $1$-parallel transport of tangent vectors from $P\in M$ to
$\nu(P)\in N$, followed by projection onto $T_{\nu(P)}N$.

The {\em Fisher-Rao metric} on $TN$ is the restriction of the metric of (\ref{eq:riemet})
to $TN$:
\begin{eqnarray}
\langle U,V\rangle_P
  & = & \langle U\phipo, V\phimo\rangle_\LL
        = \langle U\phipo - \Emu U\phipo, V\phim\rangle_\LL \nonumber \\
        [-1.5ex] \label{eq:fishrao} \\ [-1.5ex]
  & = & \langle U\phie, V\phim\rangle_\LL. \nonumber
\end{eqnarray} 
The $\alpha$-covariant derivative on $TN$ is the projection of that defined on $TM$ in
(\ref{eq:covder}); for any $\bfU,\bfV\in\Gamma TN$,
\begin{equation}
\nabla_\bfU^\alpha\bfV
  = T\nu\circ\Phialp^{-1}(\phialp\circ\imath, \bfU\bfV(\phialp\circ\imath)).
    \label{eq:Ncovdef}
\end{equation}

\begin{proposition} \label{pr:Ncovder}
For any $\alpha\in\R$ and any $\bfU,\bfV\in\Gamma TN$,
\begin{eqnarray}
\nabla_\bfU^\alpha\bfV\phie
  & = & \textstyle\bfU\bfV\phie + \frac{1-\alpha}{2}\big[
        (\bfU\phie-\EP\bfU\phie)\cdot(\bfV\phie-\EP\bfV\phie) \nonumber \\
        [-1.5ex] \label{eq:Ncovder} \\ [-1.5ex]
  &   & \qquad\qquad\qquad - \Emu(\bfU\phie-\EP\bfU\phie)\cdot(\bfV\phie-\EP\bfV\phie) 
        \big]. \nonumber
\end{eqnarray}
\end{proposition}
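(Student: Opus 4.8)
The plan is to evaluate the defining formula (\ref{eq:Ncovdef}) in the chart $\phipo$ and then transfer the result to the exponential chart via the first line of (\ref{eq:norminc}). Fix $\bfU,\bfV\in\Gamma TN$ and $P\in N$. By (\ref{eq:Ncovdef}), $\nabla^\alpha_\bfU\bfV(P)=T\nu(W)$, where $W\in T_PM$ is the tangent vector with $W\phialp=\bfU\bfV(\phialp\circ\imath)|_P$. To compute $W\phipo$ I would write $\phialp\circ\imath=(\phialp\circ\phipo^{-1})\circ(\phipo\circ\imath)$, differentiate twice (first along $\bfV$, then along $\bfU$), and evaluate the relevant derivatives of the transition map $\phialp\circ\phipo^{-1}$ from (\ref{eq:transmap}); this is essentially the second-order chain-rule computation that establishes (\ref{eq:linrel}), and it yields
\[
W\phipo=\bfU\bfV(\phipo\circ\imath)|_P+\textstyle\frac{1-\alpha}{2}\,\bigl(\bfU(\phipo\circ\imath)|_P\bigr)\cdot\bigl(\bfV(\phipo\circ\imath)|_P\bigr),
\]
i.e.\ $W$ carries precisely the Christoffel term of the $\phipo$-chart for the ambient $\alpha$-covariant derivative. (Conceptually this is forced: a directional derivative along a vector tangent to $N$ depends only on the restriction of the map to $N$, so $W$ equals the ambient $\nabla^\alpha$ evaluated on local extensions of $\bfU,\bfV$.)

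Next I would re-express the $\phipo$-quantities through the exponential chart. For $P\in N$ the tangent vector $\bfU(P)$ has $\phipo$-density $dU/dP=\bfU(\phipo\circ\imath)|_P$ with $\EP(dU/dP)=0$, and by (\ref{eq:Phiedef}) its $\phie$-representation is $\bfU\phie=dU/dP-\Emu(dU/dP)$; hence $\EP\bfU\phie=-\Emu(dU/dP)$, so that $\bfU(\phipo\circ\imath)=\bfU\phie-(\EP\bfU\phie)\bfe$ as $\bG$-valued maps on $N$, and likewise for $\bfV$. Differentiating $\bfV(\phipo\circ\imath)=\bfV\phie-(\EP\bfV\phie)\bfe$ along $\bfU$ gives $\bfU\bfV(\phipo\circ\imath)=\bfU\bfV\phie-\bigl(\bfU[\EP\bfV\phie]\bigr)\bfe$. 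Substituting into the previous display,
\[
W\phipo=\bfU\bfV\phie-\bigl(\bfU[\EP\bfV\phie]\bigr)\bfe+\textstyle\frac{1-\alpha}{2}\,(\bfU\phie-\EP\bfU\phie)\cdot(\bfV\phie-\EP\bfV\phie).
\]

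Finally, the first line of (\ref{eq:norminc}) gives $\nabla^\alpha_\bfU\bfV\phie=(T\nu W)\phie=W\phipo-\Emu(W\phipo)$. Applying $a\mapsto a-\Emu a$ to the last display: the scalar term $\bigl(\bfU[\EP\bfV\phie]\bigr)\bfe$ is annihilated; $\bfU\bfV\phie$ is left unchanged, since $\Emu\bfU\bfV\phie=0$ (because $\bfV\phie$ takes values in $\bG_0$ at every point of $N$ and the fixed functional $\Emu$ commutes with differentiation along $\bfU$); and the quadratic term acquires precisely the $\Emu$-centred form appearing in (\ref{eq:Ncovder}), which establishes the proposition. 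The one step that is not pure bookkeeping is the first, namely confirming that the extrinsically defined $\nabla^\alpha_\bfU\bfV$ carries the Christoffel term $\frac{1-\alpha}{2}\bfU\phipo\cdot\bfV\phipo$ of the chart $\phipo$; once that is in hand, the passage through $\EP$, $\Emu$ and the centring map of (\ref{eq:norminc}) is routine.
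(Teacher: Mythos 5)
Your proposal is correct and follows essentially the same route as the paper: both identify the ambient vector $W$ with $W\phialp=\bfU\bfV(\phialp\circ\imath)$, convert to the chart $\phipo$ via the transition-map derivative (\ref{eq:transmap}), pass between $\phipo\circ\imath$ and $\phie$ using (\ref{eq:norminc}), and finish with the centring map $a\mapsto a-\Emu a$ coming from $T\nu$. The only difference is cosmetic: the paper composes the chart transitions $\phie\to\phipo\to\phialp$ in a single chain, whereas you isolate the Christoffel term of the $\phipo$-chart first and transfer to $\phie$ afterwards.
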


\begin{proof}
Let $\bfW\in\Gamma TM$ be defined by $\bfW_P\phialp=\bfU_{\nu(P)}\bfV(\phialp\circ\imath)$.
According to (\ref{eq:transmap}) and (\ref{eq:norminc}), for any $P\in N$,
\begin{eqnarray*}
\bfW_P\phipo
  & = & (\phipo\circ\phialp^{-1})_{\phialp(P)}^{(1)}
        \bfU_P\left[(\phialp\circ\phipo^{-1})_{\phipo}^{(1)}
        (\phipo\circ\phie^{-1})_{\phie}^{(1)}\bfv_e\right] \\
  & = & \textstyle\exp_\bG\left(\frac{\alpha-1}{2}\phipo\right)\cdot\bfU\left[
        \exp_\bG\left(\frac{1-\alpha}{2}\phipo\right)\cdot(\bfv_e-\E_{\id_N}\bfv_e)
          \right] \\
  & = & \textstyle\frac{1-\alpha}{2}\bfU\phipo\cdot(\bfv_e-\EP\bfv_e)
          + \bfU(\bfv_e-\E_{\id_N}\bfv_e) \\
  & = & \textstyle\frac{1-\alpha}{2}(\bfu_e-\EP\bfu_e)\cdot(\bfv_e-\EP\bfv_e)
          + \bfU(\bfv_e-\E_{\id_N}\bfv_e),
\end{eqnarray*}
where $\bfu_e:=\bfU\phie$ and $\bfv_e:=\bfV\phie$.  Now
$\nabla_\bfU^\alpha\bfV=T\nu\bfW$, and so
$\nabla_\bfU^\alpha\bfV\phie = \bfW\phipo - \Emu\bfW\phipo$, which completes the proof.
\end{proof}

\begin{remark} \label{re:actens}
The {\em Amari-Chentsov tensor} on $N$ is the symmetric covariant 3-tensor field $\tau$,
defined by 
\begin{equation}
\tau_P(U,V,W) = \E_P(u-\EP u)\cdot(v-\EP v)\cdot(w-\EP w), \label{eq:actens}
\end{equation}
where $U,V,W\in T_PN$, $u=U\phie$, $v=V\phie$ and $w=W\phie$. As in the
finite-dimensional case,
\begin{equation}
\cdal(\phie^{-1}|\phie^{-1})_{a,a}^{(2,1)}(u, v; w)
  = - \frac{1-\alpha}{2}\tau_P(U,V,W), \label{eq:derdivN}
\end{equation}
where $a=\phie(P)$, and this can be used to define the $\alpha$-covariant derivative
directly on $N$.
\end{remark}

It follows from (\ref{eq:Ncovder}) that an $\alpha$-geodesic of $N$ is a smooth curve
$\bfP$ satisfying the differential equation
\begin{equation}
\phie(\bfP)^{\prime\prime}
  = -\textstyle \frac{1-\alpha}{2}\left[(\phie(\bfP)^\prime-\E_\bfP\phie(\bfP)^\prime)^2
      - \Emu(\phie(\bfP)^\prime-\E_\bfP\phie(\bfP)^\prime)^2\right].  \label{eq:geoeqn}
\end{equation}
The Fenchel-Legendre transform of Proposition \ref{pr:legend} is preserved on $N$ when
$\alpha=\pm 1$; the role of the adjoint variables $\phipo$ and $\phimo$ is then played by
$\phie$ and $\phim$.

Setting $\alpha=1$ in (\ref{eq:Ncovder}), we see that $N$ is $1$-flat and that $\phie$
is an affine chart for $\nabla^1$.  Furthermore, it is clear that $N$ is also $-1$-flat
and that $\phim$ is an affine chart for $\nabla^{-1}$.  $N$ is thus {\em dually flat}
($\alpha=\pm 1$).  Its $-1$-flatness arises from the trivial nature of its embedding in
$M$ when expressed in terms of the chart $\phimo$; this is the natural embedding
of a set of probability measures in a linear space of signed measures.  Its $1$-flatness
is associated with its Lie group structure under the product $(PQ)_N:=\nu(PQ)$.

The products on $M$ and $N$ have practical significance as ``data fusion'' operators
in Bayesian estimation. Let $(\Omega,\ccf,\PR)$ be a probability space, on which
are defined random variables $X:\Omega\rightarrow B$ and $Y:\Omega\rightarrow\bY$, where
$(\bY,\cy,\lambda)$ is a measure space.  Let $P_{XY}$ and $P_X$ be the distributions of
$(X,Y)$ and $X$, respectively, and suppose that $P_X\in N$ and that there exists a
measurable function $\Lambda:B\times\bY\rightarrow (0,\infty)$ with the following
properties:
\begin{enumerate}
\item[(i)] $P_{XY}(A)=\int_A\Lambda\,d(P_X\otimes\lambda)$ for all $A\in\cx_B\times\cy$;
\item[(ii)] $\Lambda(\cdot, y)\in\bG^+$ for all $y\in\bY$.
\end{enumerate}
For each $y$, $\Lambda(\cdot,y)$ is a {\em likelihood function} in the Bayesian problem
of estimating $X$ on the basis of the prior distribution $P_X$ and the observation
$Y=y$.  Let $R:\bY\rightarrow M$ and $P_{X|Y}:\bY\rightarrow N$ be defined as follows:
\begin{eqnarray}
R(y)(A)
  & = & \int_A \Lambda(x,y)\,\mu(dx), \nonumber \\
        [-1.5ex] \label{eq:bayprod} \\ [-1.5ex]
P_{X|Y}(y)(A)
  & = & \nu(P_XR(y)) = (P_X\nu(R(y)))_N; \nonumber
\end{eqnarray}
then $P_{X|Y}(y)$ is a regular $(Y=y)$-conditional probability distribution for $X$.  (A
specific example is that in which $\bY=\R$, $Y=f(X)+\zeta$, $f\in\bG$, and $\zeta$ has
the standard Gaussian distribution and is independent of $X$.  The likelihood function
then takes the form $\Lambda(x,y)=n(0,1)(y-f(x))$, where $n(0,1)$ is the standard
Gaussian density.)

The space $\bY$ is unimportant in estimation problems of this type in the sense that
the observation $Y=y$ is represented by any of the surrogates
$\Lambda(\cdot,y)\in\bG^+$, $R(y)\in M$ or $\nu(R(y))\in N$.  Bayes' formula can thus
be interpreted in terms of products of measures in the commutative Banach algebras
$\bG$ and $\bG_0$, making it the classical equivalent of the ``collapse of state'' of
quantum probability, under observation.  The latter involves the normalised product
(iteration) of a quantum state (positive semi-definite operator) and a real-valued
observable (Hermitian operator), both acting on an underlying Hilbert space
\cite{meye1}.

Straightforward calculations show that, for any $\alpha\in\R$ and any
$\bfU, \bfV, \bfW\in\Gamma TN$,
\[
\nabla_\bfU^\alpha\nabla_\bfV^\alpha\bfW - \nabla_\bfV^\alpha\nabla_\bfU^\alpha\bfW
  - \nabla_{[\bfU, \bfV]}^\alpha\bfW
  = \textstyle\frac{1-\alpha^2}{4}\left(\langle \bfV, \bfW\rangle_P\bfU-\langle \bfU,
    \bfW\rangle_P\bfV\right).
\]
The {\em curvature tensor} for the $\alpha$-covariant derivative, $R^\alpha$, is
thus the covariant 4-tensor field defined by
\begin{equation}
R_P^\alpha(U, V, W, X)
  = \textstyle\frac{1-\alpha^2}{4}\left(\langle V, W\rangle_P\langle U, X\rangle_P
    -\langle U, W\rangle_P\langle V, X\rangle_P\right), \label{eq:curten}
\end{equation}
where $U,V,W,X\in T_PN$.  This is equal for the dual covariant derivatives
$\nabla^{\pm\alpha}$, and is zero if and only if $\alpha=\pm 1$.

Since all pairs $P,Q\in N$ are mutually absolutely continuous with relative densities
in $\bG^+$, any $P\in N$ can assume the role of $\mu$ in the construction of
{\em local charts}, $\phi_{m,P}, \phi_{e,P}:N\rightarrow \bG_{0,P}$, where
\begin{eqnarray*}
\phi_{m,P}(Q)
  & := & dQ/dP - 1, \\
\phi_{e,P}(Q)
  & := & \log_{\bG^+}(dQ/dP) - \E_P\log_{\bG^+}(dQ/dP),
\end{eqnarray*}
and $\bG_{0,P}:=\{a\in \bG: \E_P a=0\}$.  Such charts are {\em normal} for the
$\pm 1$-covariant derivatives at $P$, in the sense that, for any $U,V\in T_PN$
\[
\langle U\phi_{m,P}, V\phi_{m,P}\rangle_{L_0^2(P)}
  = \langle U\phi_{e,P}, V\phi_{e,P}\rangle_{L_0^2(P)}
  = \langle U, V\rangle_P,
\]
and, for any $\bfU,\bfV\in\Gamma TN$,
\begin{equation}
\nabla_\bfU^{-1}\bfV\phi_{m,P} = \bfU\bfV\phi_{m,P}\quad{\rm and}\quad
\nabla_\bfU^{+1}\bfV\phi_{e,P} = \bfU\bfV\phi_{e,P}. \label{eq:locnorm}
\end{equation}
In \cite{loqu1}, G.~Loaiza and H.R.~Quiceno developed a {\em $q$-deformed exponential}
statistical manifold by introducing local charts at each point.  These
are based on the Tsallis $q$-logarithm ($0<q<1$).  In the context of $N$, as developed
here, these local charts, $s_{q,P}:N\rightarrow \bG_{0,P}$, take the form
\begin{equation}
s_{q,P}(Q)
  := \frac{\phi_{\alpha,P}(Q)-\E_P\phi_{\alpha,P}(Q)}{1+(1-q)\E_P\phi_{\alpha,P}(Q)},
     \label{eq:loquchar}
\end{equation}
where $\alpha=2q-1$ and $\phi_{\alpha,P}:M\rightarrow\bG$ is the local
variant of $\phialp$
\[
\phi_{\alpha,P}(Q):=\textstyle\frac{2}{1-\alpha}\left((dQ/dP)^{(1-\alpha)/2}-1\right).
\]
They are ``locally normal'' for the $\alpha$-covariant derivative at $P$, in the sense
that, for any $U,V\in T_PN$,
$\langle Us_{q,P}, Vs_{q,P}\rangle_{L_0^2(P)}=\langle U, V\rangle_P$ and, for any
$\bfU,\bfV\in\Gamma TN$,
\[
\nabla_\bfU^{\alpha}\bfV s_{q,P}(P) = \bfU\bfV s_{q,P}(P).
\]
Unlike the charts $\phi_{m,P}$ and $\phi_{e,P}$, for which (\ref{eq:locnorm})
is true on the whole of $N$, this is true only at $P$, reflecting the non-zero
curvature of the $\alpha$-connection at $P$.

\section{Manifolds of smooth densities} \label{se:smooth}

In this section we consider the {\em sequences} of manifolds $(M^k, k\in\N_0)$ and
$(N^k,k\in\N_0)$, as developed in sections \ref{se:manfin} and \ref{se:manprob}, making
explicit their dependence on the number of derivatives in the definition of $\bG$
($=\bG^k$).  By developing projective limits of these sequences, we define Fr\'{e}chet
manifolds of measures having smooth densities with respect to $\mu$.  The manifold of
finite measures in this context, its tangent bundle, and its model space, are as follows:
\begin{equation}
\textstyle \Mbar := \bigcap_{k\in\N_0}M^k, \quad
  T\Mbar := \bigcap_{k\in\N_0} TM^k\quad{\rm and}\quad
  \bGbar := \bigcap_{k\in\N_0} \bG^k.  \label{eq:Minfdef}
\end{equation}
Let $\rho^k:\bGbar\rightarrow\bG^k$ be the inclusion map. $\bGbar$ is a Fr\'{e}chet
space, whose topology is generated by the sequence of norms
$(\|\rho^k\|_{\bG^k}, k\in\N_0)$; $\Mbar$ is a Fr\'{e}chet manifold of finite measures
on $(B,\cx_B)$, whose densities with respect to $\mu$ are smooth bounded functions, having
bounded derivatives of all orders.  Its tangent bundle is trivialised by the chart
$\Phibar_1:T\Mbar\rightarrow\bGbar\times\bGbar$, which is defined to be the restriction
of $\Phipo^k$ to $T\Mbar$.  A tangent vector at $\Pbar\in\Mbar$ is a signed measure on
$(B,\cx_B)$, whose density with respect to $\Pbar$ is of the form $d\Ubar/d\Pbar=u$ for
some $u\in\bGbar$.

A map $f:\bGbar\rightarrow\bFbar$, taking values in another Fr\'{e}chet space $\bFbar$,
is said to be {\em Leslie differentiable} \cite{lesl1}, with derivative
$df:\bGbar\rightarrow L(\bGbar;\bFbar)$, if, for any $a\in\bGbar$, the map
$R_a:\R\times\bGbar\rightarrow\bFbar$ defined by
\begin{equation}
R_a(t,u) := \left\{
  \begin{array}{ll}
    t^{-1}(f(a+tu)-f(a))-df(a)u & {\rm if\ }t\ne 0 \\
    0 & {\rm if\ }t=0,
  \end{array}\right. \label{eq:lesdef}
\end{equation}
is continuous at $(0,u)$ for every $u\in\bGbar$.  The study of the Leslie
differentiability properties of a map between Fr\'{e}chet spaces (including the regularity
of its derivatives, considered as maps into spaces of continuous linear maps) becomes
substantially easier if the map in question is the projective limit of a system of
maps between Banach spaces \cite{dogv1}.

For any $0\le j\le k<\infty$, let $\rho^{kj}:\bG^k\rightarrow\bG^j$ be the (continuous
linear) inclusion map.  The system $(\bG^k, \rho^{kj}, 0\le j\le k<\infty)$ is a
{\em projective system} with {\em factor spaces} $\bG^k$ and {\em connecting morphisms}
$\rho^{kj}$.  The {\em projective limit} of this system is the following subset of the
cartesian product $\Pi:=\prod_{k=0}^\infty \bG^k$:
\begin{equation}
\varprojlim \bG^k
  := \left\{(a^0, a^1, \ldots)\in\Pi: \rho^{kj}a^k=a^j
     {\rm\ for\ all\ }0\le j\le k<\infty \right\}. \label{eq:plimg}
\end{equation}
In this particular example, the map
$\varprojlim \bG^k\ni (\rho^0\abar, \rho^1\abar, \ldots)\mapsto \abar\in\bGbar$ is a
toplinear isomorphism, and so we can identify $\varprojlim \bG^k$ with $\bGbar$.  The
inclusion map $\rho^k:\bGbar\rightarrow\bG^k$ then plays the role of the {\em canonical
projection} \cite{dogv1}.

Suppose that $(\bF^k, \sigma^{kj}, 0\le j\le k<\infty)$ is another projective system of
Banach spaces with projective limit $\bFbar$.  The sequence
$(f^k:\bG^k\rightarrow\bF^k, k\in\N_0)$ is a {\em projective system of maps} if
\begin{equation}
\sigma^{kj}f^k
  = f^j\rho^{kj}\quad{\rm for\ all\ }0\le j\le k<\infty. \label{eq:plimmap}
\end{equation}
The projective limit of this system is $\fbar:\bGbar\rightarrow\bFbar$,
defined by $\fbar(\abar)=(f^0(\abar), f^1(\abar), \ldots)$.  If each $f^k$ is
(Fr\'{e}chet) differentiable then $\fbar$ is Leslie differentiable, and its derivative
can be associated with a projective limit of those of $f^k$.  (See Proposition 2.3.11
in \cite{dogv1}.)  The appropriate projective system of derivatives is
$(\Delta f^k:\bG^k\rightarrow H^k(\bG;\bF),\;k\in\N_0)$, where
\begin{equation}
\Delta f^k
  := \big(f_{\rho^{k0}}^{0,(1)}, f_{\rho^{k1}}^{1,(1)},\ldots,f^{k,(1)}\big),
     \label{eq:dersys}
\end{equation}
and
\begin{equation}
H^k(\bG;\bF)
  = \bigg\{(\lambda^0, \ldots, \lambda^k)\in\prod_{i=0}^k L(\bG^i;\bF^i):
    \sigma^{ji}\lambda^j=\lambda^i\rho^{ji},\; i\le j\bigg\}. \label{eq:hkspace}
\end{equation}
The factor spaces $H^k(\bG;\bF)$ are connected by the morphisms
$h^{kj}:H^k(\bG;\bF)\rightarrow H^j(\bG;\bF)$,
\[
h^{kj}(\lambda^0, \ldots, \lambda^k) = (\lambda^0, \ldots, \lambda^j),\quad j\le k,
\]
and so constitute a projective system of Banach spaces.  The associated projective limit
is toplinear isomorphic with $\Hbar(\bG;\bF)$ (defined by the obvious variant of
(\ref{eq:hkspace})), and the map $\epsilon:\Hbar(\bG;\bF)\rightarrow L(\bGbar;\bFbar)$,
defined by $\epsilon(\lambda^0, \lambda^1, \ldots)=\varprojlim\lambda^k
=(\lambda^0\rho^0, \lambda^1\rho^1, \ldots)$, is continuous linear, with respect to
the toplogy of uniform convergence on bounded sets. (See Theorem 2.3.10
in \cite{dogv1}.)  That $\Hbar(\bG;\bF)$ is a projective limit of Banach spaces is
central to the regularity of the Leslie derivative $d\fbar$.  If each $f^k$ is smooth
then $d\fbar:\bGbar\rightarrow L(\bGbar;\bFbar)$ is Leslie smooth.  (See Propositions
2.3.11, 2.3.12 in \cite{dogv1}.)

Applying these ideas to the transition maps $\Phialp^k\circ(\Phipo^k)^{-1}$ and their
inverses, we see that the projective limit $\Phibar_\alpha\circ\Phibar_1^{-1}$ is Leslie
diffeomorphic, and all its derivatives (together with those of its inverse) are smooth
maps from open subsets of $\bGbar$ to appropriate spaces of continuous linear maps.
$(\Phibar_\alpha, \alpha\in\R)$ is a Leslie smooth atlas for $T\Mbar$.  For any Leslie
differentiable map $f:\bGbar\rightarrow\bFbar$ and any $\Ubar\in T_\Pbar\Mbar$
\[
\Ubar f = d(\fbar\circ\phibar_\alpha^{-1})\Ubar\phibar_\alpha\quad{\rm for\ any\ }
           \alpha\in\R.
\]

We define a special class of smooth vector fields of $\Mbar$ -- those whose
$\Phibar_\alpha$-representations are projective limits of smooth maps between the Banach
spaces $\bG^k$.  Let $S$ be the following set of sequences:
\begin{equation}
S = \big\{(\bfn_k\in\N_0, k\in\N_0): \bfn_k\le \bfn_{k+1},\; \sup\bfn_k=+\infty\big\},
    \label{eq:indcond}
\end{equation}
and note that, for any $\bfn\in S$,
$(\bG^{\bfn_k}, \rho^{\bfn_k\bfn_j},\; 0\le j\le k<\infty)$ is a projective system of
Banach spaces with projective limit $\bGbar$.  For some $\bfn\in S$, let
$(\bfu^k:\bG^k\rightarrow\bG^{\bfn_k},\; k\in\N_0)$ be a projective system of
smooth maps, with projective limit $\bfubar:\bGbar\rightarrow\bGbar$.  We regard
$\bfubar\circ\phibar_1$ as being the $\Phibar_1$-representation of a smooth vector
field $\bfUbar$ ($\bfUbar\phibar_1:=\bfubar\circ\phibar_1$).  We denote the set of all
such {\em projective-limit smooth vector fields} $\Gamma_{\rm pl}T\Mbar$.  This has a
linear structure, in which the sum of $(\bfu^k:\bG^k\rightarrow\bG^{\bfn_k})$ and
$(\bfv^k:\bG^k\rightarrow\bG^{\bfm_k})$, for $\bfm,\bfn\in S$, is the projective system
$(\bfw^k:=\rho^{\bfn_k\bfl_k}\bfu^k+\rho^{\bfm_k\bfl_k}\bfv^k:
\bG^k\rightarrow\bG^{\bfl_k})$, where $\bfl_k:=\min\{\bfm_k,\bfn_k\}$.

\begin{remark} \label{re:gampl}
$\Gamma_{\rm pl}T\Mbar$ is strictly smaller than $\Gamma T\Mbar$ -- it does not contain
the vector field with $\Phibar_1$-representation $\bfubar(\abar)=\abar r(\abar,0)$, for
example, where $r$ is the usual metric on $\bGbar$.  However, it does contain many
useful vector fields occurring in the theory of partial differential equations.  For
example, if $\bX=\R^d$ then the second-order differential operator
$\partial^2/\partial x_i\partial x_j$ lifts to a vector field in $\Gamma_{\rm pl}T\Mbar$.
(Cf.~(\ref{eq:kusstra}).)
\end{remark}

\begin{proposition} \label{pr:fdiff}
Let $\bfubar:\bGbar\rightarrow\bGbar$ be as defined above, and let
$(f^k:\bG^k\rightarrow\bF^k)$ be a projective system of {\em smooth} maps, as described
in (\ref{eq:plimmap}).  Then the sequence of maps
\begin{equation}
\left(f_{\rho^{kl}}^{l,(1)}\rho^{\bfn_kl}\bfu^k:\bG^k\rightarrow\bF^l,\;
l=\min\{k,\bfn_k\},\; k\in\N_0\right) \label{eq:seqder}
\end{equation}
is a projective system, with projective limit $d\fbar\bfubar$, and
$\bfUbar(\fbar\circ\phibar_1)=d\fbar\bfubar\circ\phibar_1$.
\end{proposition}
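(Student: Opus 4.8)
The plan is to verify that the sequence in (\ref{eq:seqder}) is a genuine projective system of smooth maps between Banach spaces, then identify its projective limit by appealing to the general theory of \cite{dogv1}. First I would check the compatibility condition analogous to (\ref{eq:plimmap}): for $l=\min\{k,\bfn_k\}$ and $l'=\min\{k',\bfn_{k'}\}$ with $k'\le k$, one needs $\sigma^{ll'}\bigl(f_{\rho^{kl}}^{l,(1)}\rho^{\bfn_kl}\bfu^k\bigr)=\bigl(f_{\rho^{k'l'}}^{l',(1)}\rho^{\bfn_{k'}l'}\bfu^{k'}\bigr)\rho^{kk'}$. This should follow from three facts already in hand: that $(f^k)$ is a projective system, so the derivatives $f^{k,(1)}$ restrict/project compatibly (this is exactly the content behind (\ref{eq:dersys}) and (\ref{eq:hkspace})); that $(\bfu^k)$ is a projective system, so $\rho^{\bfn_k\bfn_{k'}}\bfu^k=\bfu^{k'}\rho^{kk'}$; and that the various inclusion maps $\rho$ and $\sigma$ compose associatively. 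Differentiating a fixed smooth $f^l$ commutes with precomposition by the linear inclusion $\rho^{kl}$, so $f_{\rho^{kl}(a)}^{l,(1)}=\bigl(f^l\circ\rho^{kl}\bigr)_a^{(1)}\circ(\rho^{kl})^{-1}$-type bookkeeping reduces everything to chasing the diagram of connecting morphisms.

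Next I would identify the projective limit. By construction $\bfubar=\varprojlim\bfu^k$ and $d\fbar=\varprojlim\Delta f^k$ in the sense of (\ref{eq:dersys}), so the limit of (\ref{eq:seqder}) is the map $\abar\mapsto\bigl(f_{\rho^l\abar}^{l,(1)}(\rho^{\bfn_k l}\bfu^k(\rho^k\abar))\bigr)_{k}$; using the identifications $\varprojlim\bG^k\cong\bGbar$ and the fact that $\bGbar$ carries the initial topology from the $\rho^k$, this collapses to $d\fbar(\abar)\,\bfubar(\abar)$. The cleanest route is to invoke Proposition 2.3.11 of \cite{dogv1} (already cited in the excerpt), which says precisely that the Leslie derivative of a projective-limit map is the projective limit of the Fr\'{e}chet derivatives, together with the continuity of the evaluation map $\epsilon:\Hbar(\bG;\bF)\rightarrow L(\bGbar;\bFbar)$; evaluating $d\fbar$, a section of $\Hbar(\bG;\bF)$, against the vector field $\bfubar$, which is itself a projective limit, yields a projective limit again because $\epsilon$ is continuous linear and composition is continuous on bounded sets.

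The final clause $\bfUbar(\fbar\circ\phibar_1)=d\fbar\bfubar\circ\phibar_1$ is then just unwinding the definition of the tangent operator from the line preceding (\ref{eq:indcond}): for $\Ubar=\bfUbar_\Pbar$ we have $\Ubar(\fbar)=d(\fbar\circ\phibar_1^{-1})\,\Ubar\phibar_1$ and $\Ubar\phibar_1=\bfubar(\phibar_1(\Pbar))$ by the defining relation $\bfUbar\phibar_1:=\bfubar\circ\phibar_1$, while $d(\fbar\circ\phibar_1^{-1})$ evaluated at $\phibar_1(\Pbar)$ applied to $\bfubar(\phibar_1(\Pbar))$ is $d\fbar(\Pbar)\bfubar(\phibar_1(\Pbar))$ by the chain rule for Leslie-differentiable maps (again from \cite{dogv1}), since $\phibar_1$ is a Leslie diffeomorphism. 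Composing back over all $\Pbar\in\Mbar$ gives the stated identity of maps.

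I expect the main obstacle to be the first step---checking the compatibility condition for (\ref{eq:seqder})---because the index $l=\min\{k,\bfn_k\}$ is not monotone in the naive way and one must be careful that the projected derivative $f_{\rho^{kl}}^{l,(1)}$ genuinely lands in $L(\bG^k;\bF^l)$ and transforms correctly under both the $\rho$-tower on the source and the $\sigma$-tower on the target. Once the diagram chase is set up correctly this is routine, but it requires care in matching the two different index sequences ($k\mapsto\bfn_k$ for $\bfubar$, and $k\mapsto k$ for the derivative order of $f$) and taking their minimum consistently; everything downstream is a direct citation of \cite{dogv1}.
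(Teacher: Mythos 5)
Your proposal is correct and follows essentially the same route as the paper: verify the projective compatibility of (\ref{eq:seqder}) by differentiating the relation $\sigma^{lm}f^l=f^m\rho^{lm}$ and combining it with the projective property $\rho^{\bfn_k\bfn_j}\bfu^k=\bfu^j\rho^{kj}$, identify the limit as $d\fbar\bfubar$ via the projective-limit machinery of \cite{dogv1}, and obtain the final identity by unwinding the definition of the tangent operator in the chart $\phibar_1$. The only cosmetic quibble is your "$(\rho^{kl})^{-1}$-type bookkeeping," which should simply be the chain rule $(f^l\circ\rho^{kl})_a^{(1)}=f_{\rho^{kl}a}^{l,(1)}\circ\rho^{kl}$ (and note that $l=\min\{k,\bfn_k\}$ is in fact non-decreasing in $k$, so the index matching is less delicate than you fear).
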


\begin{proof}
For any $j\le k$, let $l:=\min\{k,\bfn_k\}$ and $m:=\min\{j,\bfn_j\}$.
Differentiating the projective relation $\sigma^{lm}f^l=f^m\rho^{lm}$, we obtain
$\sigma^{lm}f^{l,(1)}=f_{\rho^{lm}}^{m,(1)}\rho^{lm}$.
Restricting the base-point from $\bG^l$ to $\bG^k$, and applying the resulting linear
map to $\rho^{\bfn_kl}\bfu^k$, we obtain
\[
\sigma^{lm}f_{\rho^{kl}}^{l,(1)}\rho^{\bfn_kl}\bfu^k
  = f_{\rho^{km}}^{m,(1)}\rho^{\bfn_km}\bfu^k
  = \left(f_{\rho^{jm}}^{m,(1)}\rho^{\bfn_jm}\bfu^j\right)\rho^{kj},
\]
which establishes the projective property.  The projective limit is
\[
\left(f_{\rho^{\bfl_0}}^{\bfl_0,(1)}\rho^{\bfl_0}\bfubar,
  f_{\rho^{\bfl_1}}^{\bfl_1,(1)}\rho^{\bfl_1}\bfubar, \ldots\right)
  \equiv d\fbar\bfubar,
\]
where $\bfl_k:=\min\{k,\bfn_k\}$.  Let $\bfP\in\bfUbar(\Pbar)$; then
\[
\bfUbar(\Pbar)(\fbar\circ\phibar_1)
  = (\fbar\circ\phibar_1(\bfP))^\prime(0)
  = d\fbar\bfUbar(\Pbar)\phibar_1
  = d\fbar\bfubar\circ\phibar_1(\Pbar),
\]
which completes the proof.
\end{proof}

Suppose that $\bfVbar\in\Gamma_{\rm pl}T\Mbar$ is defined by the projective system of
smooth maps $(\bfv^k:\bG^k\rightarrow\bG^{\bfm_k},\; k\in\N_0)$ for some $\bfm\in S$.
By applying Proposition \ref{pr:fdiff} to the projective system
$(\phialp^{\bfm_k}\circ(\phipo^{\bfm_k})^{-1}\circ\bfv^k:
\bG^k\rightarrow\bG^{\bfm_k}(=:\bF^k))$, we can define the $\alpha$-covariant derivative
on $\Mbar$: $\nabla_\bfUbar^\alpha \bfVbar\phibar_1=\bfwbar\circ\phibar_1$, where
\begin{equation}
\bfwbar
  = d(\phibar_1\circ\phibar_\alpha^{-1})
    d(d(\phibar_\alpha\circ\phibar_1^{-1})\bfvbar)\bfubar
  = d\bfvbar\bfubar + \textstyle\frac{1-\alpha}{2}\bfvbar\cdot\bfubar.  \label{eq:cdlim}
\end{equation}
The $\Phibar_1$-representation $\bfwbar$ is the projective limit of the system
$(\bfw^k:\bG^k\rightarrow\bG^{\bfi_k},\;\; k\in\N_0)$, where
$(\bfi_k=\min\{\bfm_k,\bfn_k, \bfm_{\bfn_k}\},\; k\in\N_0)\in S$, and so
$\nabla_\bfUbar^\alpha \bfVbar\in\Gamma_{\rm pl}T\Mbar$.
The remaining constructions in sections \ref{se:manfin} and \ref{se:manprob} carry over
to $\Mbar$ without difficulty.  Key points to note are as follows.
\begin{enumerate}
\item[$\bullet$] The smoothness of the $\alpha$-divergences on $\Mbar$ follows from
their smoothness on $M^k$, and that of the inclusion map $\imath^k:\Mbar\rightarrow M^k$. 
The metric and covariant derivatives could be derived directly from $\cdal$ as in sections
\ref{se:manfin} and \ref{se:manprob}.

\item[$\bullet$] The statistical manifold $\Nbar$ is defined in the obvious way.  It is
a Leslie $C^\infty$-embedded submanifold of $\Mbar$ since its image under
$\phibar_{-1}$ is the subspace of $\bGbar$ comprising those members with zero $\mu$-mean.

\item[$\bullet$] An $\alpha$-geodesic of $\Nbar$ is a smooth curve $\bfP$ whose projection
$\imath^k\bfP$ satisfies (\ref{eq:geoeqn}) for all $k$.  ($\alpha$-geodesics of $\Mbar$,
and $\pm 1$-geodesics of $\Nbar$ are, of course, straight lines in appropriate charts.)
\end{enumerate}

\section{Concluding Remarks}

In this paper we have constructed a family of non-parametric statistical manifolds, $N$,
that support the full geometry of the Fisher-Rao metric and Amari $\alpha$-covariant
derivatives for all $\alpha\in\R$.  Manifolds in the family admit global mixture and
exponential charts, $\phim$ and $\phie$, which are of importance in applications.  The
$\alpha$-covariant derivatives were computed in the chart $\phie$, and their curvature
on $N$ was found: $N$ is $\alpha$-flat if and only if $\alpha=\pm 1$; otherwise the
curvature tensor changes sign as $|\alpha|$ passes through $1$.  The $\alpha$-divergences
are of class $C^\infty$ on $N$. As in the parametric case, their derivatives provide an
alternative way of defining covariant derivatives.

The statistical manifolds were constructed {\em extrinsically}, via smooth embeddings in
particular manifolds of finite measures, $M$.  The latter are covered by every chart in
a smooth one-parameter atlas $(\phialp, \alpha\in\R)$, each chart inducing its own
parallel transport on the tangent bundle.  Since $M=\phipo^{-1}(\bG)$ and
$N=\phie^{-1}(\bG_0)$, the manifolds $M$ and $N$ are, in one sense, no more than linear
spaces themselves.  The statistical interest comes from the interplay between the
different affine representations provided by the charts $\phialp$, in particular
$\phipo$ and $\phimo$.  Manifold theory enters the picture with the introduction of the
base-point-dependent Fisher-Rao metric on the tangent bundle.

The manifolds are applicable to problems in Bayesian estimation, in which the
$\alpha$-divergences are important measures of approximation error.  In particular,
the process of posterior distributions of a nonlinear filter can be expressed as a
solution of a suitable differential equation on $N$.  In this, and other potential
applications to Physics (eg.~the Fokker-Planck or the Boltzmann equations), it is
important for the members of $N$ to have differentiable densities, and this is built in
to its definition.  A Fr\'{e}chet manifold of probability measures having {\em smooth}
densities was defined in section \ref{se:smooth}, via projective limits.
Under suitable technical conditions, the coefficients of certain partial differential
equations, including those of nonlinear filtering, can be interpreted as vector fields
of this manifold.

The extra regularity of $N$, over other non-parametric manifolds in the literature, is
gained at the cost of inclusiveness: each probability density in $N$ has a strictly
positive infimum.  Although many idealised models of physical systems do not exhibit
this property, it is often possible to substitute a model that does.  In the context of
nonlinear filtering of diffusion processes, for example, we can constrain the diffusion
to stay on a bounded domain by introducing suitable boundary conditions.  It is open to
question whether this results in any less accurate a model of reality than the idealised
model.  This is a question for future research.  Another potential avenue is to use $N$
as a starting point in the construction of manifolds that place less stringent
regularity constraints on their members.

\section*{Acknowledgements}

The author would like to thank both anonymous referees for their careful reading of the
paper. Their comments have led to a substantial improvement in its presentation.


\begin{thebibliography}{10}

\bibitem{amar1}
  S.-I.~Amari, H.~Nagaoka, \textit{Methods of Information Geometry}, Translations of
  Mathematical Monographs, 191, American Mathematical Society, Providence, 2000.
\bibitem{ajls1}
  N.~Ay, J.~Jost, H.~V.~L\^{e} and L.~Schwachhofer, Information geometry and
  sufficient statistics, \textit{Probab.~Theory Related Fields}, \textbf{162} (2015)
  327--364.
\bibitem{ajls2}
  N.~Ay, J.~Jost, H.~V.~L\^{e} and L.~Schwachhofer, Paramerized measure models,
  arXiv: https://arxiv.org/abs/1510.07305 (2017)
  327--364.
\bibitem{barn1}
  O.E.~Barndorff-Nielsen,
  \textit{Information and Exponential Families in Statistical Theory}, Wiley, 1978.
\bibitem{babm1}
  M.~Bauer, M.~Bruveris, P.W.~Michor, Uniqueness of the Fisher-Rao metric on the space
  of smooth densities, \textit{Bull.~London Math.~Soc.}, \textbf{48} (2016) 499--506.
\bibitem{brpi1}
  Brigo, D., Pistone, G.: Dimensionality reduction for measure valued evolution equations
  in statistical manifolds, in: Nielsen, F., Critchley, F., Dodson, C.T.J.~(Eds.):
  Computational Information Geometry for Image and Signal Processing, Springer (2017)
  217--265
\bibitem{brmi1}
  M.~Bruveris and P.W.~Michor, Geometry of the Fisher-Rao metric on the space of smooth
  densities on a compact manifold, arXiv:1607.04550 (2016).
\bibitem{cepi1}
  A.~Cena, G.~Pistone, Exponential statistical manifold,
  \textit{Ann. Inst. Statist. Math.}, \textbf{59} (2007) 27--56.
\bibitem{chen1}
  N.N.~Chentsov, \textit{Statistical Decision Rules and Optimal Inference},
  Translations of Mathematical Monographs, 53, American Mathematical Society,
  Providence, 1982.
\bibitem{dogv1}
  C.T.J.~Dodson, G.~Galanis, E.~Vassiliou, \textit{Geometry in a Fr\'{e}chet
  Context: A Projective Limit Approach}, London Mathematical Society Lecture Note
  Series, 428, Cambridge University Press, 2016.
\bibitem{fuku1}
  K.~Fukumizu, Exponential manifold by reproducing kernel Hilbert spaces, in:
  P.~Gibilisco, E.~Riccomagno, M.P.~Rogantin, H.~Winn (eds.), \textit{Algebraic and
  Geometric Methods in Statistics}, Cambridge University Press, (2009) 291--306.
\bibitem{gipi1}
  P.~Gibilisco, G.~Pistone, Connections on non-parametric statistical manifolds by Orlicz
  space geometry, \textit{Infinite-dimensional analysis, Quantum Probability and
  Related Topics}, \textbf{1} (1998) 325--347.
\bibitem{laur1}
  S.L.~Lauritzen,  \textit{Statistical Manifolds}, IMS Lecture Notes
  Series, \textbf{10}, Institute of Mathematical Statistics, 1987.
\bibitem{lesl1}
  J.A.~Leslie, On a differential structure for the group of diffeomorphisms,
  \textit{Topology}, \textbf{46} (1967) 263--271.
\bibitem{lish1}
  R.S.~Liptser, A.N.~Shiryayev, \textit{Statistics of Random Processes I---General Theory},
  Springer (2001)
\bibitem{lopi1}
  B.~Lods, G.~Pistone, Information geometry formalism for the spatially homogeneous
    Boltzmann equation, \textit{Entropy}, \textbf{17} (2015) 4323--4363
\bibitem{loqu1}
  G.~Loaiza, H.R.~Quiceno, A $q$-exponential statistical Banach manifold,
  J.~Math.~Annal.~Appl., \textbf{398} (2013) 466--476.
\bibitem{muri1}
  M.K.~Murray, J.W.~Rice, \textit{Differential Geometry and Statistics}, Monographs in
  Statistics and Applied Probability, \textbf{48}, Chapman Hall, 1993.
\bibitem{meye1}
  P.A.~Meyer, \textit{Quantum Probability for Probabilists}, Lecture Notes in Mathematics
  1538, Springer, 1995.
\bibitem{naud1}
  J.~Naudts, \textit{Generalised Thermostatistics}, Springer, London, 2011.
\bibitem{newt4}
  N.J.~Newton, An infinite-dimensional statistical manifold modelled on Hilbert
  space, \textit{J.~Functional Analysis}, \textbf{263} (2012) 1661--1681.
\bibitem{newt5}
  N.J.~Newton, Information geometric nonlinear filtering, \textit{Infinite Dimensional
  Analysis, Quantum Probability and Related Topics}, \textbf{18} (2015) 1550014.
\bibitem{newt7}
  N.J.~Newton, Infinite-dimensional statistical manifolds based on a balanced chart,
  \textit{Bernoulli}, \textbf{22} (2016) 711--731.
\bibitem{niba1}
  F.~Nielsen, F.~Barbaresco (Eds.), Proceedings of GSI 2013 Conference,
  \textit{Lecture Notes in Computer Science}, 8085, Springer, Berlin, 2013.
\bibitem{niba2}
  F.~Nielsen, F.~Barbaresco (Eds.), Proceedings of GSI 2015 Conference,
  \textit{Lecture Notes in Computer Science}, 9389, Springer, Berlin, 2015.
\bibitem{piro1}
  G.~Pistone, M.P.~Rogantin, The exponential statistical manifold: mean parameters,
  orthogonality and space transformations, \textit{Bernoulli}, \textbf{5} (1999) 721-760.
\bibitem{pise1}
  G.~Pistone, C.~Sempi, An infinite-dimensional geometric structure on the 
  space of all the probability measures equivalent to a given one,
  \textit{Annals of Statistics}, \textbf{23} (1995) 1543--1561.
\bibitem{rao1}
  C.R.~Rao, Information and accuracy obtainable in the estimation of statistical
  parameters, \textit{Bulletin of the Calcutta Mathematical Society},
  \textbf{37} (1945) 81--91. 
\end{thebibliography}
\end{document}